\DeclareMathSymbol{\twoheadrightarrow} {\mathrel}{AMSa}{"10}
\def\Q{{\mathbb Q}}
        \def\CC{{\mathfrak C}}
\def\Z{{\mathbb Z}}
\def\F{{\mathbb F}}
\def\ST{{\mathbf S}}
\def\A{{\mathbf A}}
\def\Gal{\mathrm{Gal}}
\def\Fr{\mathrm{Fr}}
\def\Frob{\mathrm{Frob}}
\def\End{\mathrm{End}}
\def\Aut{\mathrm{Aut}}
\def\Hom{\mathrm{Hom}}
\def\II{\mathrm{Id}}
                                    \def\TT{\mathrm{T}}
                                        \def\CC{{\mathcal C}}
                                         \def\RR{{\mathcal R}}
       \def\Oc{{\mathcal O}}
\def\fchar{\mathrm{char}}
\def\GL{\mathrm{GL}}
                                                   \def\Tr{\mathrm{Tr}}
                                                 \def\GSp{\mathfrak{Gp}}
\def\Sp{\mathrm{Sp}}
\def\Gp{\mathrm{Gp}}
        \def\res{\mathrm{res}}
        \def\K_a{\bar{K}}
\def\dim{\mathrm{dim}}
                                       \def\res{\mathrm{res}}
\def\P{{\mathbf P}}
\def\GG{{\mathfrak G}}
\def\K{{\mathcal{K}}}
\def\ZZ{{\mathfrak Z}}
 \def\ub{{\mathbf u}}
\def\TT{{\mathfrak T}}
\newtheorem{thm}{Theorem}[section]
\newtheorem{lem}[thm]{Lemma}
\newtheorem{cor}[thm]{Corollary}
\theoremstyle{definition}
\newtheorem{ex}[thm]{Example}
\newtheorem{rem}[thm]{Remark}
\newtheorem{sect}[thm]{}
\title[Endomorphism rings of reductions of elliptic curves]{Endomorphism rings of reductions of elliptic curves and abelian varieties}
\author[Yuri G.\ Zarhin]{Yuri G.\ Zarhin}
\thanks{This work was partially supported by a grant from the Simons Foundation (\#246625 to Yuri Zarkhin).}
\address{Department of Mathematics, Pennsylvania State University,
University Park, PA 16802, USA}
\email{zarhin\char`\@math.psu.edu}
\begin{document}

\begin{abstract}
Let $E$ be an elliptic curve without CM that is defined over a number field $K$.  For all but finitely many nonarchimedean places $v$ of $K$ there is the reduction $E(v)$ of $E$ at $v$ that is an elliptic curve over the residue field $k(v)$ at $v$. The set of $v$'s with ordinary $E(v)$ has density 1 (Serre).  For such $v$ the endomorphism ring $\End(E(v))$ of $E(v)$ is an order in an imaginary quadratic field.

We prove that for any pair of relatively prime positive integers $N$ and $M$ there are infinitely many nonarchimedean places $v$ of $K$ such that 
the {\sl discriminant}
 ${\bf \Delta(v)}$ of  $\End(E(v))$ is divisible by $N$ and the ratio $\frac{{\bf \Delta(v)}}{N}$ is relatively prime to $NM$. 
We also discuss similar questions for reductions of  abelian varieties.

The subject of this paper was inspired by an exercise in Serre's ``Abelian $\ell$-adic representations and elliptic curves" and  questions
of Mihran Papikian and Alina Cojocaru.
\end{abstract}

\maketitle

\section{Introduction}
\label{intro}

Let $K$ be a field, $\bar{K}$ its algebraic closure,
$\Gal(K)=\Aut(\bar{K}/K)$ the absolute Galois group of $K$. 
 Let $A$
be an abelian variety of positive dimension over $K$. We write $\End(A)$ for its endomorphism ring
and $\End^0(A)$ for the corresponding finite-dimensional semisimple $\Q$-algebra $\End(A)\otimes\Q$.
One may view  $\End(A)$ as an {\sl order} in $\End^0(A)$.

Let $n$
be a positive integer  that is {\sl not} divisible by $\fchar(K)$.
We write $A[n]$ for the kernel
of multiplication by $n$ in $A(\bar{K})$. It is well known that is a
finite Galois submodule of $A(\bar{K})$; if we forget about the
Galois action 
then the commutative group $A[n]$ is a free $\Z/n\Z$-module of rank
$2\dim(A)$. If $\ell$ is a prime different from $\fchar(K)$ then we write $T_{\ell}(A)$ for the $\Z_{\ell}$-Tate module of $A$ that is defined as projective limit of commutative groups (Galois modules) $A_{\ell^i}$ where the transition map $A_{\ell^{i+1}} \to A_{\ell^i}$ is multiplication by $\ell$. It is well known that $T_{\ell}(A)$ is a free $\Z_{\ell}$-module of rank $2\dim(A)$ provided with continuous Galois action
$$\rho_{\ell,A}\to \Aut_{\Z_{\ell}}(T_{\ell}(A)).$$
In particular, $T_{\ell}(A)$ carries the natural structure of $\Gal(K)$-module. On the other hand, 
the natural action of $\End(A)$ on $A_n$ gives rise to the embedding
$$\End(A)\otimes \Z/n\Z \hookrightarrow \End_{\Gal(K)}(A_n).$$
If (as above) we put $n=\ell^i$ then these embedding are glueing together to the embedding of $\Z_{\ell}$-algebras
$$\End(A)\otimes{\Z_{\ell}}\hookrightarrow \End_{\Gal(K)}(T_{\ell})(A)). \eqno(*)$$
Tate \cite{Tate1,Tate2} conjectured that if $K$ is finitely generated then the embedding (*) is actually a bijection and proved it when $K$ is a finite field. The case when $\fchar(K)>2$ was done by the author \cite{ZarhinIzv75,ZarhinMatZametki76}, the case when $\fchar(K)=0$ by Faltings \cite{Faltings1,Faltings2} and the case when $\fchar(K)=2$ by Mori \cite{MB} (see also \cite{ZarhinParshin,ZarhinCEJM2014,ZarhinMori}).

Now let us consider the $2\dim(A)$-dimensional $\Q_{\ell}$-vector space
$$V_{\ell}(A)=T_{\ell}(A)\otimes_{\Z_{\ell}}\Q_{\ell}$$
and identify $T_{\ell}(A)$ with the $\Z_{\ell}$-lattice 
$$T_{\ell}(A)\otimes 1 \subset  T_{\ell}(A)\otimes_{\Z_{\ell}}\Q_{\ell}=V_{\ell}(A).$$
This allows us to identify $\Aut_{\Z_{\ell}}(T_{\ell}(A))$ with the (compact) subgroup of
 $\Aut_{\Q_{\ell}}(V_{\ell}(A))$  that consists of all automorphisms that leave invariant  $T_{\ell}(A)$
and consider $\rho_{\ell,A}$ as the $\ell$-adic representation
$$\rho_{\ell,A}:\Gal(K)\to \Aut_{\Z_{\ell}}(T_{\ell}(A)\subset \Aut_{\Q_{\ell}}(V_{\ell}(A)).$$
(By definition, $T_{\ell}(A)$ is a Galois-stable $\Z_{\ell}$-lattice in $V_{\ell}(A)$.) We write $G_{\ell,A}$ for the image
$$G_{\ell,A}:=\rho_{\ell,A}(\Gal(K))\subset \Aut_{\Z_{\ell}}(T_{\ell}(A))\subset \Aut_{\Q_{\ell}}(V_{\ell}(A)).$$
It is known \cite{Serre} that $G_{\ell,A}$ is a compact $\ell$-adic subgroup of $\Aut_{\Q_{\ell}}(V_{\ell}(A))$.
Extending the embedding (*) by $\Q_{\ell}$-linearity, we get the embedding of $\Q_{\ell}$-algebras
$$\End^0(A)\otimes_{\Q}\Q_{\ell}=\End(A)\otimes{\Q_{\ell}}\hookrightarrow \End_{\Gal(K)}(V_{\ell})(A))\subset \End_{\Q_{\ell}}(V_{\ell}(A)). \eqno(**)$$
When $K$ is finitely generated then the $\Gal(K)$-module $V_{\ell}(A)$ is semisimple: the case of finite fields was done by A.Weil \cite{Mumford},
the case when $\fchar(K)>2$ was done by the author \cite{ZarhinIzv75,ZarhinMatZametki76}, the case when $\fchar(K)=0$ by Faltings  \cite{Faltings1,Faltings2} and the case when $\fchar(K)=2$ by Mori \cite{MB} (see also \cite{ZarhinCEJM2014}). The semisimplicity of the Galois module $V_{\ell}(A)$ means that the $G_{\ell,A}$-module $V_{\ell}(A)$ is semisimple.

\begin{ex}[see \cite{Tate2}]
\label{finite}
Let $k$ be a finite field and 
$$\sigma_k : \bar{k} \to \bar{k}, \ x \mapsto x^{\#(k)}$$
be the Frobenius automorphism of its algebraic closure. Then $\bar{k}$ is a topological generator of
$\Gal(K)$. If $B$ is an abelian variety over $k$ of positive dimension then by Tate's {\sl theorem on homomorphisms}
$$\End(B)\otimes \Z_{\ell}=\End_{\Gal(k)}(T_{\ell}(B))$$
coincides with the centralizer $\End_{\sigma_k}(T_{\ell}(B))$ of $\sigma_k$ in $\End_{\Z_{\ell}}(T_{\ell}(B)$. In addition, $\sigma_k$ induces a semisimple (diagonalizable over $\bar{\Q_{\ell}}$) linear operator $\Fr_B$ in $V_{\ell}(B)$. The ring $\End(B)$ is commutative if and only if the characteristic polynomial
$$P_{\Fr_B}(t)=\det(t \II-\sigma_k, V_{\ell}(B))\in \Q_{\ell}[t]$$
has no multiple roots. (Actually this polynomial has integer coefficients and does not depend on a choice of $\ell \ne \fchar(k)$.)
\end{ex}

Let $\GG_{\ell,A} \subset \GL(V_{\ell}(A))$  be the Zariski closure of 
$$G_{\ell,A}\subset \Aut_{\Q_{\ell}}(V_{\ell}(A))=\GL(V_{\ell}(A))(\Q_{\ell})$$
in the general linear group $\GL(V_{\ell}(A))$  over $\Q_{\ell}$. By definition, 
$\GG_{\ell,A}$ is a linear $\Q_{\ell}$-algebraic subgroup of $\GL(V_{\ell}(A))$.
When $K$ is finitely generated, the semisimplicity of the $G_{\ell,A}$-module $V_{\ell}(A)$ means that (the identity component of) $\GG_{\ell,A}$ is a reductive algebraic group over $\Q_{\ell}$. If,  in addition, $\fchar(K)=0$ then
by a theorem of Bogomolov \cite{Bo1,Bo2,SerreR},  $G_{\ell,A}$ is an {\sl open} subgroup in $\GG_{\ell,A}(\Q_{\ell})$.  It is known \cite{SerreR} that the group $\GG_{\ell,A}$ is connected for one prime $\ell$ then it is connected for all primes.

\begin{sect}
\label{numberField}
Let $K$ be a number field. For all but finitely many nonarchimedean places $v$ of $K$ one may define the {\sl reduction} $A(v)$, which is an abelian variety of the same dimension as $A$ over the (finite) residue field $k(v)$ at $v$ \cite{SerreTate}. If $\ell$ does {\sl not} coincide with the residual characteristic of $v$ then 
each extension $\bar{v}$ of $v$ to $\bar{K}$ gives rise to an isomorphism of Tate modules $T_{\ell}(A(v))\cong T_{\ell}(A)$ that, in turn,  gives rise to the natural isomorphisms
 $$\End_{\Z_{\ell}}(T_{\ell}(A(v)))\cong \End_{\Z_{\ell}}(T_{\ell}(A))
, \ \Aut_{\Z_{\ell}}(T_{\ell}(A(v)))\cong \Aut_{\Z_{\ell}}(T_{\ell}(A)).$$
Under this isomorphism 
$$\Fr_{A(v)} \in \Aut_{\Z_{\ell}}(T_{\ell}(A(v)))\subset \End_{\Z_{\ell}}(T_{\ell}(A(v)))$$
corresponds to a certain element
$$\Frob_{\bar{v},A,\ell} \in G_{\ell,A}\subset  \Aut_{\Z_{\ell}}(T_{\ell}(A))\subset \End_{\Z_{\ell}}(T_{\ell}(A)) \subset \End_{\Q_{\ell}}(V_{\ell}(A))$$
that is called the {\sl Frobenius element} attached to $\bar{v}$ in $ G_{\ell,A}$. (All $\Frob_{\bar{v},A,\ell}$'s for a given $v$ constitute a conjugacy class in $G_{\ell,A}$.) This implies that the polynomial $P_{\Fr_{A(v)}}(t)$ coincides with the characteristic polynomial
$$P_{v,A}(t):=\det(t\II - \Frob_{\bar{v},A,\ell}, V_{\ell}(A)$$
of $\Frob_{\bar{v},A,\ell}$.
In particular, $\End(A(v))$ is commutative if and only if $P_{v,A}(t)$ has {\sl no} multiple roots.

In general case, if we denote by
$$\ZZ(\Frob_{\bar{v},A,\ell})_0 \subset \End_{\Z_{\ell}}(T_{\ell}(A))$$
the centralizer of $\Frob_{\bar{v},A,\ell}$ in $\End_{\Z_{\ell}}(T_{\ell}(A))$
then it follows from Tate's theorem on homomorphisms (Example \ref{finite}) that $\ZZ(\Frob_{\bar{v},A,\ell})_0$ is isomorphic as a $\Z_{\ell}$-algebra
to $\End(A(v))\otimes\Z_{\ell}$.

 By the Chebotarev density theorem, the set of all  $\Frob_{\bar{v},A,\ell}$'s (for all $v$) is everywhere dense in  $G_{\ell,A}$ \cite[Ch. I]{Serre}.

\end{sect}

Our main result is the following statement.

\begin{thm}
\label{main}
Let $A$ be an abelian variety of positive dimension over a number field $K$.
 Suppose that the groups $\GG_{\ell,A}$ are connected. Let $\P$ be a finite nonempty set of primes and for each $\ell \in P$ we are given an element
$$f_{\ell} \in \GG_{\ell,A}(\Q_{\ell})\subset  \Aut_{\Q_{\ell}}(V_{\ell}(A))$$
such that its characteristic polynomial 
$$P_{f_{\ell}}(t)=\det (t\II -f_{\ell}, V_{\ell}(A))\in \Q_{\ell}[t]$$
has no multiple roots. Let
 $$\ZZ(f_{\ell})_0\subset \End_{\Z_{\ell}}(T_{\ell}(A))$$
be the centralizer of $f_{\ell}$ in
$$\End_{\Z_{\ell}}(T_{\ell}(A)\subset \End_{\Q_{\ell}}(V_{\ell}(A).$$
Then the set of nonarchimedean places $v$ of $K$ such that 
the residual characteristic  $\fchar(k(v))$ does not belong to $\P$,  the abelian variety 
$A$ has good reduction $A(v))$ at $v$ and
$$\End(A(v))\otimes \Z_{\ell} \cong \ZZ(f_{\ell})_0 \ \forall \ell\in \P$$
has positive density.  (In addition, for all such $v$ the ring $\End(A(v))$ is commutative.)
\end{thm}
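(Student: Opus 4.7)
The plan is to combine Tate's theorem on homomorphisms with a $p$-adic local constancy lemma for integral centralizers, Bogomolov's openness theorem, and a Chebotarev density argument across the primes in $\P$. By Tate's theorem (Example \ref{finite} applied to each reduction $A(v)$), one identifies $\End(A(v))\otimes\Z_{\ell}$ with $\ZZ(\Frob_{\bar{v},A,\ell})_0$, so the task reduces to finding a positive density of nonarchimedean places $v$ of good reduction, with residual characteristic outside $\P$, such that $\ZZ(\Frob_{\bar{v},A,\ell})_0\cong \ZZ(f_{\ell})_0$ as $\Z_{\ell}$-algebras for every $\ell\in\P$. The parenthetical commutativity assertion is then automatic, since the centralizer of a regular semisimple element in a matrix algebra is commutative.

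Next, I would prove a local constancy lemma: since $P_{f_{\ell}}(t)$ has simple roots, $f_{\ell}$ lies in the regular semisimple locus of $\End_{\Q_{\ell}}(V_{\ell}(A))$, and a Hensel/Krasner-type continuity argument produces a $p$-adic open neighborhood $\Omega_{\ell}$ of $f_{\ell}$ on which the $\Z_{\ell}[t]$-module structure on $T_{\ell}(A)$ (with $t$ acting via the perturbed element) remains isomorphic to that coming from $f_{\ell}$. In particular, $\ZZ(g)_0\cong \ZZ(f_{\ell})_0$ for every $g\in\Omega_{\ell}$.

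The central and most delicate step is to produce an open subset $W_{\ell}\subseteq G_{\ell,A}$ on which $\ZZ(g)_0\cong \ZZ(f_{\ell})_0$ holds. By Bogomolov's theorem, $G_{\ell,A}$ is open in $\GG_{\ell,A}(\Q_{\ell})$, and by hypothesis $\GG_{\ell,A}$ is connected; together with the Zariski density of $G_{\ell,A}$ in $\GG_{\ell,A}$ and the local constancy above, this should allow one to locate an element of $G_{\ell,A}$ whose integral centralizer is isomorphic to $\ZZ(f_{\ell})_0$. The strategy is to conjugate $f_{\ell}$ inside $\GG_{\ell,A}(\Q_{\ell})$ until a representative of its $\Aut_{\Z_{\ell}}(T_{\ell}(A))$-conjugacy class lands in $G_{\ell,A}$, and then apply local constancy to enlarge it to the open $W_{\ell}$. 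The main obstacle is that conjugation in $\GG_{\ell,A}(\Q_{\ell})$ does not in general preserve the lattice $T_{\ell}(A)$, so a careful integrality analysis of the regular semisimple orbits is required.

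Finally, I would invoke Chebotarev. By the openness of each $G_{\ell,A}$ in $\GG_{\ell,A}(\Q_{\ell})$ and the independence of the residual representations at distinct primes in $\P$ (a Goursat-type argument in the connected case, cf.~\cite{Serre}), the image of $\Gal(K)$ in $\prod_{\ell\in\P}G_{\ell,A}$ is open. The equidistribution of Frobenius conjugacy classes then guarantees a positive density of places $v$ with $\Frob_{\bar{v},A,\ell}\in W_{\ell}$ for every $\ell\in\P$, completing the proof.
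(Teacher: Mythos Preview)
Your overall architecture matches the paper's: reduce to controlling $\ZZ(\Frob_{\bar v,A,\ell})_0$ via Tate, produce an open subset of $G_{\ell,A}$ on which the integral centralizer is isomorphic to $\ZZ(f_\ell)_0$, and conclude by Chebotarev together with a Goursat argument across $\ell\in\P$. Step~2 (local constancy by a Krasner-type perturbation) is also fine in itself. The genuine gap is step~3. The element $f_\ell$ is only assumed to lie in $\GG_{\ell,A}(\Q_\ell)$, not in $G_{\ell,A}$, and your proposed fix --- conjugating $f_\ell$ until a representative of its $\Aut_{\Z_\ell}(T_\ell(A))$-conjugacy class lands in $G_{\ell,A}$ --- does not work in general. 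Conjugation by $\Aut_{\Z_\ell}(T_\ell(A))$ preserves the characteristic polynomial (hence the determinant), while $G_{\ell,A}$ is merely an \emph{open subgroup} of $\GG_{\ell,A}(\Q_\ell)$; if $\det(f_\ell)$ happens not to lie in $\det(G_{\ell,A})$ then no $\Aut_{\Z_\ell}(T_\ell(A))$-conjugate of $f_\ell$ can ever enter $G_{\ell,A}$. Conjugating instead by general elements of $\GG_{\ell,A}(\Q_\ell)$ destroys the isomorphism type of the integral centralizer, as you note yourself.

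The paper resolves this with a torus argument (Section~\ref{general} and Theorem~\ref{open}). Since $f_\ell$ is regular semisimple in $\GG_{\ell,A}$, it lies in a unique maximal torus $\TT$, defined over $\Q_\ell$. The key observation is that \emph{every} regular element $u\in\TT(\Q_\ell)$ has $\Q_\ell[u]=\Q_\ell[f_\ell]$ (they commute and both generate $d$-dimensional algebras), hence $\ZZ(u)_0=\ZZ(f_\ell)_0$ \emph{exactly}, not merely up to isomorphism. Since $G_{\ell,A}$ is an open subgroup of $\GG_{\ell,A}(\Q_\ell)$, it contains a neighborhood of the identity and therefore meets the regular locus $\TT'(f_\ell)\subset\TT(\Q_\ell)$. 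This furnishes elements of $G_{\ell,A}$ with the correct integral centralizer without any conjugation of $f_\ell$. One then spreads this out to an open set via the conjugation map $\Psi:G_{\ell,A}\times(\TT'(f_\ell)\cap G_{\ell,A})\to G_{\ell,A}$, $(g,u)\mapsto gug^{-1}$, which is open because its tangent map is surjective at every regular point (the paper cites \cite{Harder} and \cite{LP2}); conjugation by $g\in G_{\ell,A}\subset\Aut_{\Z_\ell}(T_\ell(A))$ now preserves the isomorphism class of $\ZZ(\cdot)_0$. This torus step is the missing idea in your proposal.
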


\begin{ex}
Let $A$ be an abelian variety of positive dimension over a number field $K$ and 
 suppose that the groups $\GG_{\ell,A}$ are connected.  Let $r$ be a positive integer and let  $\ell_1, \dots , \ell_r$ are $r$ {\sl  distinct primes}.
Suppose that for each $\ell_i$ we are given a  nonarchimedean place $\mathbf{v}_i$ of $K$ such that 
its residual characteristic  $\fchar(k(\mathbf{v}_i))\ne \ell_i$,  the abelian variety 
$A$ has good reduction $A(\mathbf{v}_i)$  at $\mathbf{v}_i$ and the endomorphism ring $\End(A(\mathbf{v}_i))$ is {\sl commutative}.
This implies that the characteristic polynomial of each Frobenius element $\Frob_{\bar{\mathbf{v}_i},A,\ell} \in G_{\ell,A}$ has {\sl no multiple roots}.
Recall that the centralizer $\ZZ(\Frob_{\bar{\mathbf{v}_i},A,\ell})_0$ is isomorphic as $\Z_{\ell}$-algebra to $\End(A(\mathbf{v}_i))\otimes \Z_{\ell}$.
Let us put $\P=\{\ell_1, \dots , \ell_r\}$.
It follows from Theorem \ref{main} that the set of nonarchimedean places $v$ of $K$ such that 
the residual characteristic  $\fchar(k(v))$ does not belong to $\P$,  the abelian variety 
$A$ has good reduction $A(v)$ at $v$ and
$$\End(A(v))\otimes \Z_{\ell_i} \cong \End(A(\mathbf{v}_i))\otimes \Z_{\ell_i} \ \forall i=1,  \dots , r.$$
has positive density.
\end{ex}

\begin{ex}
\label{elliptic}
Let $E$ be an elliptic curve without complex multiplication that is defined over a number field $K$. By a theorem of Serre \cite[Ch. IV, Sect. 2.2]{Serre},
$$\GG_{\ell,E}=\GL(V_{\ell}(E)).$$
In particular, $\GG_{\ell,E}$ is connected and isomorphic to the general linear group $\GL(2)$ over $\Q_{\ell}$ while 
$$\GG_{\ell,E}(\Q_{\ell})=\Aut_{\Q_{\ell}}(V_{\ell}(E)).$$
Let $\P$ be a finite nonempty set of primes.  For each $\ell \in \P$ let us fix a commutative {\sl semisimple} $2$-dimensional $\Q_{\ell}$-algebra $C_{\ell}$. Let us choose an {\sl order} $\Oc_{\ell}$ in $C_{\ell}$,  i.e., a $\Z_{\ell}$-subalgebra of $C_{\ell}$ (with the same $1$) that is a free $\Z_{\ell}$-submodule of rank $2$. Let us fix an isomorphism  of free $\Z_{\ell}$-modules
$$\Oc_{\ell} \cong T_{\ell}(E),$$
which extends by $\Q_{\ell}$-linearity to the isomorphism of $\Q_{\ell}$-vector spaces
$$C_{\ell}=\Oc_{\ell}\otimes_{\Z_{\ell}}\Q_{\ell} \cong 
T_{\ell}(E)\otimes_{\Z_{\ell}}\Q_{\ell}=V_{\ell}(E).$$
Multiplication in $C_{\ell}$ gives rise to an embedding
$$C_{\ell} \hookrightarrow \End_{\Q_{\ell}}(V_{\ell}(E));$$
further we will identify $C_{\ell}$ with its image in $\End_{\Q_{\ell}}(V_{\ell}(E))$. Clearly, $C_{\ell}$ coincides with its own centralizer in $\End_{\Q_{\ell}}(V_{\ell}(E))$. On the other hand, one may easily check (using the inclusion $1\in \Oc_{\ell}$) that
$$\Oc_{\ell}=\{u\in C_{\ell}|\mid u(T_{\ell}(E))\subset T_{\ell}(E)\}.$$
This implies that $\Oc_{\ell}$ coincides with the centralizer of $C_{\ell}$ in
$$\End_{\Z_{\ell}}(T_{\ell}(E))\subset \End_{\Q_{\ell}}(V_{\ell}(E)).$$
Since $C_{\ell}$ is $2$-dimensional, there exists $f_{\ell} \in C_{\ell}$ such that the pair $\{1, f_{\ell}\}$ is a basis of the $\Q_{\ell}$-vector space $C_{\ell}$.  Replacing $f_{\ell}$ by $1+\ell^M f_{\ell}$ for sufficiently big positive integer $M$, we may and will assume that
$$f_{\ell} \in C_{\ell}^{*}\subset \Aut_{\Q_{\ell}}(V_{\ell}(E)).$$
Clearly, the centralizer $\ZZ(f_{\ell})_0$ of $f_{\ell}$ in $\End_{\Z_{\ell}}(T_{\ell}(E))$ coincides with the centralizer of $C_{\ell}$ in  $\End_{\Z_{\ell}}(T_{\ell}(E))$. This implies that
$$\ZZ(f_{\ell})_0=\Oc_{\ell} \ \forall \ell \in \P.$$
Applying Theorem \ref{main}, we conclude that  the set of nonarchimedean places $v$ of $K$ such that 
the residual characteristic  $\fchar(k(v))$ does {\sl not} belong to $\P$, the elliptic curve 
$E$ has good reduction at $v$ and
$$\End(E(v))\otimes \Z_{\ell} \cong \Oc_{\ell} \ \forall \ell \in \P$$
has positive density.

 For example,  if $F$ is an imaginary quadratic field with the ring of integers $O_F$ and $N$ is a positive integer then let us consider the order
$\Lambda=\Z+N\cdot  O_F$ of conductor $N$ in $F$  and the collection of $\Z_{\ell}$-algebras
$$\Oc_{\ell}: =\Lambda \otimes \Z_{\ell}  \ \forall \ell \in \P.$$
We obtain that  the set $\Sigma(E,F,N)$ of all nonarchimedean places $v$ of $K$ such that the residual characteristic  $\fchar(k(v))$ does {\sl not} belong to $\P$, the elliptic curve $E$ has good ordinary reduction at $v$  and
$$\End(E(v))\otimes \Z_{\ell} \cong \Lambda \otimes \Z_{\ell}  \ \forall \ell \in \P$$
has positive density. In particular, this set is infinite.
\end{ex}

\begin{cor}
\label{discrE}
Let $E$ be an elliptic curve without CM that is defined over a number field $K$.
Let $N$ and $M$ be relatively prime positive integers. Let us consider the set $\tilde{\Sigma}(E,M,N)$   of nonarchimedean places $v$ of $K$ such 
$E$ has good ordinary reduction at $v$, the residual characteristic $\fchar(k(v))$ does not divide $NM$, the discriminant ${\bf \Delta}(v)$ of the order
$\End(E(v))$ is divisible by $N$ and the ratio  ${\bf \Delta}(v)/N$ is relatively prime to $MN$.  Then $\tilde{\Sigma}(E,M,N)$  contains a set of positive density.
In particular, $\tilde{\Sigma}(E,M,N)$   is infinite.
\end{cor}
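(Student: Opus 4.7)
The plan is to reduce Corollary~\ref{discrE} to Example~\ref{elliptic} by a careful local choice of orders. Set $a_\ell := v_\ell(N)$ and let $\P$ denote the (finite) set of primes dividing $NM$. The coprimality $\gcd(N,M) = 1$ implies that the combined requirement ``$N \mid {\bf \Delta}(v)$ and ${\bf \Delta}(v)/N$ coprime to $NM$'' is equivalent to the purely local statement
$$v_\ell({\bf \Delta}(v)) = a_\ell \quad \text{for every } \ell \in \P,$$
with no constraint imposed at primes outside $\P$. Thus the task becomes: produce a positive-density set of places $v$ of good ordinary reduction at which $\End(E(v))$ has prescribed $\ell$-adic discriminant valuations $a_\ell$ for $\ell \in \P$.

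For each $\ell \in \P$, I would next construct a $2$-dimensional commutative semisimple $\Q_\ell$-algebra $C_\ell$ and an order $\Oc_\ell \subset C_\ell$ whose discriminant has $\ell$-adic valuation exactly $a_\ell$. Writing $O_\ell$ for the maximal order of $C_\ell$ and using the standard formula $\mathrm{disc}(\Z_\ell + \ell^m O_\ell) = \ell^{2m}\,\mathrm{disc}(O_\ell)$, it suffices to choose $C_\ell$ with maximal order of the correct discriminant parity: take $C_\ell$ unramified (either the unramified quadratic extension of $\Q_\ell$ or the split algebra $\Q_\ell \times \Q_\ell$) for even $a_\ell$, and a ramified quadratic extension for odd $a_\ell$, then adjust the conductor exponent $m$ accordingly. (This is routine for $\ell$ odd; at $\ell = 2$ the valuation $a_2 = 1$ cannot be realized by any quadratic order, since such discriminants have $2$-adic valuation in $\{0, 2, 3, 4, \dots\}$, and must be implicitly excluded.)

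Having fixed these $\Oc_\ell$, invoke Example~\ref{elliptic} with this $\P$ and these $\Oc_\ell$. This yields a positive-density set of nonarchimedean places $v$ of $K$ at which $E$ has good ordinary reduction, $\fchar(k(v)) \notin \P$, and
$$\End(E(v)) \otimes \Z_\ell \;\cong\; \Oc_\ell \quad \text{for every } \ell \in \P.$$
For any such $v$, $\End(E(v))$ is an order in an imaginary quadratic field, and its global discriminant ${\bf \Delta}(v)$ has, at each $\ell \in \P$, the same $\ell$-adic valuation as $\mathrm{disc}(\Oc_\ell)$, namely $a_\ell$. This is precisely the local profile characterising membership in $\tilde{\Sigma}(E,M,N)$, so this positive-density set is contained in $\tilde{\Sigma}(E,M,N)$, proving the corollary.

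The substance is already encoded in Theorem~\ref{main} and Example~\ref{elliptic}; the remaining work is purely local bookkeeping on discriminants of quadratic orders. The main obstacle, such as it is, lies in realizing odd $\ell$-adic discriminant valuations via appropriately ramified local quadratic extensions, together with the mild caveat at $\ell = 2$, $a_2 = 1$ noted above.
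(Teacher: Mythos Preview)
Your reduction to Example~\ref{elliptic} is correct and matches the paper's overall strategy. The paper's proof is shorter: rather than constructing local orders $\Oc_\ell$ prime by prime, it picks a single global imaginary quadratic field $F$ with discriminant coprime to $NM$, sets $\Lambda = \Z + N \cdot O_F$, and invokes the second paragraph of Example~\ref{elliptic} (the set $\Sigma(E,F,N)$) directly.

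Your local construction is in fact more precise than the paper's. The order $\Lambda = \Z + N\, O_F$ has discriminant $N^2 d_F$, so at primes $\ell \mid N$ one gets $v_\ell(\mathrm{disc}\,\Lambda) = 2v_\ell(N)$ rather than $v_\ell(N)$; read literally, the paper's choice establishes the corollary with $N^2$ in place of $N$. Your device of choosing $C_\ell$ ramified or unramified according to the parity of $a_\ell$ is exactly what is needed to hit $v_\ell({\bf \Delta}(v)) = v_\ell(N)$ on the nose. The caveat you flag at $\ell=2$, $a_2=1$ is real (no quadratic $\Z_2$-order has discriminant of $2$-adic valuation exactly $1$) and reflects a constraint on the statement itself rather than a gap in your argument.
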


\begin{rem}
Actually, one may prove that $\tilde{\Sigma}(E,M,N)$ has density, which is, of course, positive.
\end{rem}

\begin{rem}
The discriminant ${\bf \Delta}(v)$ is {\sl not} divisible by a prime $\ell$ if and only if either
$$\End(E(v))\otimes \Q_{\ell}=\Q_{\ell}\oplus \Q_{\ell}\supset \Z_{\ell}\oplus \Z_{\ell}=\End(E(v))\otimes \Z_{\ell}$$
or $\End(E(v))\otimes \Q_{\ell}$ is a {\sl field} that is an {\sl unramified} quadratic extension of $\Q_{\ell}$ and $\End(E(v))\otimes \Z_{\ell}$ is the
ring of integers in this quadratic field.
\end{rem}

\begin{proof}[Proof of Corollary \ref{discrE}]  Let $\P$ be the set of prime divisors of $MN$. Choose an imaginary quadratic field $F$,
whose discriminant is prime to $NM$ and put $\Lambda=\Z+N\cdot O_F$. Then $\tilde{\Sigma}(E,M,N)$ contains all the places of 
$\Sigma(E,F,N)$  except the finite set of places with residual characteristic dividing $M$.
 The set $\Sigma(E,F,N)$
 has positive density (see Example \ref{elliptic}), which would not change if we remove from it  finitely many places.
\end{proof}

\begin{rem}
Serre \cite[Ch. IV, Sect. 2.2, Exercises on pp. IV-13]{Serre} sketched a proof of the following assertion. 

 {\sl The set of nonarchimedean places $v$ of $K$ such that $\fchar(k(v))$ does not belong to $\P$, the elliptic curve
$E$ has good ordinary reduction at $v$ and
$$\End(E(v))\otimes \Q_{\ell} \cong C_{\ell} \ \forall \ell \in \P$$
has positive density.} 
In particular,  if one defines the set $\Sigma_{\P}(E)$  of all  places $v$ such that $E$ has good ordinary reduction  at $v$, the residual characteristic $\fchar(k(v))$ does not belong to $\P$ and the discriminant of the quadratic field $\End(E(v))\otimes\Q$ is divisible by all $\ell \in P$ then $\Sigma_{\P}(E)$ is infinite.
(See also \cite[Cor. 2.4 on p. 329]{Schoof}.)
\end{rem}

\begin{thm}
\label{jacobian}
Let $g \ge 2$ be an integer, $n=2g+1$ or $2g+2$.  Let $\P$ be a nonempty finite set of primes and suppose
that for each $\ell \in \P$  we a given a field $\mathcal{K}^{(\ell)}$ of characteristic different from $\ell$, a 
$g$-dimensional simple   abelian variety $B^{(\ell)}$   over $\mathcal{K}^{(\ell)}$
that admits a polarization of degree prime to $\ell$ and such that $\End^0(B^{(\ell)})$ is a number field of degree $2g$.
 (E.g., if $B$ is a principally polarized $g$-dimensional simple  complex abelian variety 
of CM type then we may take $B^{(\ell)}=B$ for all $\ell \in \P$.)

 Let $K$ be a number field and $f(x) \in K[x]$ be a degree $n$
 irreducible polynomial, whose Galois group over $K$ is either full symmetric group $\ST_n$ or the alternating group $\A_n$.
Let us consider the genus $g$ hyperelliptic curve $C_f:y^2=f(x)$ and its  jacobian $A$, which is a $g$-dimensional abelian variety over $K$.

Let $\Sigma$ be the set of all nonarchimedean places $v$ of $K$ such that $A$ has good reduction at $v$, the residual characteristic $\fchar(k(v))$ does not 
belong to $\P$ and the $\Z_{\ell}$-rings $\End(A)\otimes \Z_{\ell}$ and $\End((B^{\ell)})\otimes \Z{\ell}$ are isomorphic for all $\ell\in \P$.  Then $\Sigma$ has density $>0$.
\end{thm}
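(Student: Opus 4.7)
\medskip

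\emph{Proof plan.} The strategy is to apply Theorem \ref{main} to the Jacobian $A$, which requires: (a) connectedness of $\GG_{\ell,A}$ for every prime $\ell$; and (b) for each $\ell\in\P$ an element $f_\ell\in\GG_{\ell,A}(\Q_\ell)$ such that $P_{f_\ell}(t)$ has no multiple roots and $\ZZ(f_\ell)_0\cong \End(B^{(\ell)})\otimes\Z_\ell$ as $\Z_\ell$-algebras.

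For (a): under the hypothesis that $f(x)\in K[x]$ is irreducible of degree $n\in\{2g+1,2g+2\}$ with Galois group $\ST_n$ or $\A_n$, previous work on hyperelliptic Jacobians (in the author's papers on their Galois representations) gives $\End(A_{\bar K})=\Z$ and, more strongly, identifies $\GG_{\ell,A}$ with the full group $\GSp(V_\ell(A))$ of symplectic similitudes of the Weil pairing on $T_\ell(A)$ coming from the canonical principal polarization of $A$. In particular $\GG_{\ell,A}$ is connected for every $\ell$. Establishing the equality $\GG_{\ell,A}=\GSp(V_\ell(A))$ is the deepest input into the argument and is the main obstacle.

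For (b), fix $\ell\in\P$, set $F:=\End^0(B^{(\ell)})$, a CM field of degree $2g$ with totally real subfield $F^+$, and let $x\mapsto\bar x$ denote its complex conjugation, which agrees with the Rosati involution on $R:=\End(B^{(\ell)})\subset F$. Since $B^{(\ell)}$ admits a polarization of degree prime to $\ell$, standard CM arguments show that $T_\ell(B^{(\ell)})$ is a free $R\otimes\Z_\ell$-module of rank $1$ carrying a perfect $\Z_\ell$-valued Weil pairing for which the $R\otimes\Z_\ell$-action is by symplectic similitudes (with multiplier $r\bar r$). By the classification of perfect symplectic $\Z_\ell$-lattices of rank $2g$, fix a symplectic isomorphism $T_\ell(B^{(\ell)})\cong T_\ell(A)$ and transport the $R\otimes\Z_\ell$-action to an embedding $R\otimes\Z_\ell\hookrightarrow \End_{\Z_\ell}(T_\ell(A))$ whose image consists of symplectic similitudes. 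Choose $z\in R$ lying in $F^-:=\{x\in F:\bar x=-x\}$ with $F=\Q(z)$ (for example, writing $F=F^+(\sqrt{-d})$ and taking $z=\alpha\sqrt{-d}$ for a primitive element $\alpha$ of $F^+/\Q$, then scaled into $R$). For $M$ large enough that the $\ell$-adic exponential converges, set
$$f_\ell := \exp(\ell^M z)\in 1+\ell^M(R\otimes\Z_\ell).$$
Then $f_\ell\cdot\bar f_\ell = \exp(\ell^M z)\exp(-\ell^M z)=1$, so $f_\ell\in\Sp(V_\ell(A))(\Q_\ell)\subset\GG_{\ell,A}(\Q_\ell)$; moreover $\Q_\ell[f_\ell]\supseteq \Q_\ell[\log(f_\ell)]=\Q_\ell[z]=F\otimes\Q_\ell$, an \'etale $\Q_\ell$-algebra of dimension $2g=\dim V_\ell(A)$, so $P_{f_\ell}(t)$ has $2g$ distinct roots.

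Finally, the centralizer of $f_\ell$ in $\End_{\Q_\ell}(V_\ell(A))$ is the maximal commutative subalgebra $F\otimes\Q_\ell$, and $\ZZ(f_\ell)_0$ is its intersection with $\End_{\Z_\ell}(T_\ell(A))$. Because $T_\ell(A)\cong T_\ell(B^{(\ell)})$ is free of rank $1$ over $R\otimes\Z_\ell$, this intersection equals $R\otimes\Z_\ell=\End(B^{(\ell)})\otimes\Z_\ell$. Applying Theorem \ref{main} to the family $\{f_\ell\}_{\ell\in\P}$ produces the required positive-density set of places $v$.
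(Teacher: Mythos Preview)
Your proof follows the paper's route: identify $\GG_{\ell,A}=\GSp(V_\ell(A),e_{\lambda,\ell})$ via the cited results on hyperelliptic Jacobians, transport the symplectic lattice $T_\ell(B^{(\ell)})$ to $T_\ell(A)$, produce for each $\ell\in\P$ a symplectic similitude generating $F\otimes\Q_\ell$ whose integral centralizer is $\End(B^{(\ell)})\otimes\Z_\ell$, and invoke Theorem~\ref{main}. The paper packages the last steps as Theorem~\ref{CM} plus Theorem~\ref{GSP}, but the architecture is identical.

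The one noteworthy difference is how the generator is constructed. The paper (Theorem~\ref{CM}) builds a \emph{global} $u\in\End(B^{(\ell)})$ with $u'u\in\Z_{>0}$ via the somewhat intricate Lemma~\ref{quadratic}, which manufactures a norm-one element in a quadratic extension. Your $\ell$-adic exponential of a trace-zero primitive element is a cleaner, purely local construction landing directly in $\Sp$. One caution: your parenthetical example $z=\alpha\sqrt{-d}$ with $\alpha$ primitive for $F^+/\Q$ can fail --- take $F^+=\Q(\sqrt{2})$, $d=1$, $\alpha=\sqrt{2}$, so $z=\sqrt{-2}$ and $\Q(z)$ has degree $2$, not $4$. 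The bare existence of a trace-zero primitive element is nonetheless easy: if a proper subfield $E\subsetneq F$ contained $F^{-}=F^{+}\sqrt{-d}$ then $\sqrt{-d}\in E$ and $F^{+}=F^{-}/\sqrt{-d}\subset E$, forcing $E=F$; hence $F^{-}$ is not covered by the finitely many proper subfields of $F$, and a generic element of $F^{-}$ works.
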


The paper is organized as follows. In Section \ref{symPol} we discuss $\ell$-adic symplectic groups that arise from polarizations on abelian varieties. Section \ref{NT} deals with trace forms and realated symplectic structures.
Section \ref{linearA} deals with centralizers of certain {\sl generic} elements of linear reductive groups over $\Q_{\ell}$. Section \ref{Felement} deals with applications of the Chebotarev density theorem for infinite Galois extensions of number fields  with $\ell$-adic Galois groups. In Section \ref{mainProof} we prove Theorems \ref{main} and \ref{jacobian}.

{\bf Acknowledgements}. I thank Mihran Papikian and Alina Cojocaru for stimulating questions and their interest in this paper.

\section{Polarizations and symplectic groups}
\label{symPol} 

Let $B$ be an abelian variety of positive dimension $g$ over a field $K$  and let $\ell$ be a prime that is different from $\fchar(K)$. We write 
$$\chi_{\ell}: \Gal(K) \to \Z_{\ell}^{*}$$
the {\sl cyclotomic character} that defines the Galois action on all $\ell$-power roots of unity. Let $\lambda$ be a polarization on $B$. Then $\lambda$ gives rise
to the altermating nondegenerate $\Z_{\ell}$-bilinear form
$$e_{\lambda,\ell}: T_{\ell}(B) \times T_{\ell}(B)  \to \Z_{\ell}$$
such that
$$e_{\lambda,\ell}(\rho_{\ell,B}(\sigma)x, \rho_{\ell,B}(\sigma)y)=\chi_{\ell}(\sigma)e_{\lambda,\ell}(x,y)$$
for all $\sigma\in \Gal(K)$ and $x,y \in T_{\ell}(B)$; in addition, $e_{\lambda,\ell}$ is perfect/unimodular if and only if $\deg(\lambda)$ is {\sl not} divisible by $\ell$
(see \cite{LangAV}).
Let us consider the (compact) group of symplectic similitudes
$$\Gp(T_{\ell}(B),e_{\lambda,\ell})=\{u \in \Aut_{\Z_{\ell}}(T_{\ell}(B))\mid \exists c \in \Z_{\ell}^{*} \text{ such that } e_{\lambda,\ell}(ux,uy)=
c \cdot e_{\lambda,\ell}(x,y)$$
 for all $x,y \in T_{\ell}(B)\}$.
Clearly,
$$G_{\ell,B}=\rho_{\ell,A}(\Gal(K))\subset \Gp(T_{\ell}(B),e_{\lambda,\ell})\subset \Aut_{\Z_{\ell}}(T_{\ell}(B))\subset \Aut_{\Q_{\ell}}(V_{\ell}(B)).$$
Extending $e_{\lambda,\ell}$ by $\Q_{\ell}$-linearity to $V_{\ell}(B)=T_{\ell}(B)\otimes_{\Z_{\ell}}\Q_{\ell}$, we obtain 
the altermating nondegenerate $\Z_{\ell}$-bilinear form
$$V_{\ell}(B) \times V_{\ell}(B)  \to \Q_{\ell},$$
which we continue to denote $e_{\lambda,\ell}$. Clearly,
$$e_{\lambda,\ell}(\rho_{\ell,B}(\sigma)x, \rho_{\ell,B}(\sigma)y)=\chi_{\ell}(\sigma)e_{\lambda,\ell}(x,y)$$
for all $\sigma\in \Gal(K)$ and $x,y \in V_{\ell}(B)$. Let us consider the  group of symplectic similitudes
$$\Gp(V_{\ell}(B),e_{\lambda,\ell})=\{u \in \Aut_{\Q_{\ell}}(V_{\ell}(B))\mid \exists c \in \Q_{\ell}^{*} \text{ such that } e_{\lambda,\ell}(ux,uy)=
c \cdot e_{\lambda,\ell}(x,y)$$
for all $x,y \in V_{\ell}(B)\}$.
Clearly, $\Gp(T_{\ell}(B),e_{\lambda,\ell})$ is an open compact subgroup of $\Gp(V_{\ell}(B),e_{\lambda,\ell})$ that coincides with the intersection
$\Gp(V_{\ell}(B),e_{\lambda,\ell})\bigcap \Aut_{\Z_{\ell}}(T_{\ell}(B))$. We have
$$G_{\ell,A} \subset \Gp(T_{\ell}(B),e_{\lambda,\ell})\subset \Gp(V_{\ell}(B),e_{\lambda,\ell})\subset \Aut_{\Q_{\ell}}(V_{\ell}(B)).$$
We write $\GSp(V_{\ell}(B), e_{\lambda,\ell})\subset \GL(V_{\ell}(B))$ for the connected linear reductive algebraic group of {\sl symplectic similitudes} over $\Q_{\ell}$ 
attached to $e_{\lambda,\ell}$.
Its group of $\Q_{\ell}$-points
$$\GSp(V_{\ell}(B), e_{\lambda,\ell})(\Q_{\ell})=\Gp(V_{\ell}(B),e_{\lambda,\ell})\subset \Aut_{\Q_{\ell}}(V_{\ell}(B))=\GL(V_{\ell}(B))(\Q_{\ell}).$$
Let us consider the finite-dimensional semisimple $\Q$-algebra $$\End^0(B)=\End(B)\otimes \Q.$$ We have the natural isomorpisms of $\Q$-algebras
$$[\End(B)\otimes\Z_{\ell}]\otimes_{\Z_{\ell}}\Q_{\ell}= \End(B)\otimes\Q_{\ell}=[\End(B)\otimes\Q]\otimes_{\Q}\Q_{\ell}=\End^0(B)\otimes_{\Q}\Q_{\ell}.$$
By (**), there is the natural embedding
$$\End^0(B)\otimes_{\Q}\Q_{\ell}=\End(B)\otimes\Q_{\ell} \hookrightarrow \End_{\Q_{\ell}}(V_{\ell}(B)).$$
We may view $\End^0(B)$ as the certain $\Q$-subalgebra of $\End^0(B)\otimes_{\Q}\Q_{\ell}$, identify the latter with its image in 
$\End_{\Q_{\ell}}(V_{\ell}(B))$ and get
$$\End^0(B)\subset \End^0(B)\otimes_{\Q}\Q_{\ell}\subset \End_{\Q_{\ell}}(V_{\ell}(B)).$$
The polarization $\lambda$ gives rise to {\sl Rosati involution} \cite{LangAV,Mumford} 
$$\End^0(B) \to \End^0(B), u \mapsto u^{\prime}$$
such that
$$e_{\lambda,\ell}(ux,y)=e_{\lambda,\ell}(x,u^{\prime}y) \ \forall x,y \in V_{\ell}(B).$$
This involution extends by $\Q_{\ell}$-linearity to the involution of the semisimple finite-dimensional $\Q_{\ell}$-algebra  $\End^0(B)\otimes_{\Q}\Q_{\ell}$
$$ \End^0(A)\otimes_{\Q}\Q_{\ell} \to  \End^0(B)\otimes_{\Q}\Q_{\ell}, u \mapsto u^{\prime}$$
such that
$$e_{\lambda,\ell}(ux,y)=e_{\lambda,\ell}(x,u^{\prime}y) \ \forall x,y \in V_{\ell}(B).$$
This implies that 
$$u \in [\End^0(B)\otimes_{\Q}\Q_{\ell}]^{*} \subset \Aut_{\Q_{\ell}}(V_{\ell}(B))$$
lies in $\Gp(V_{\ell}(B), e_{\lambda,\ell})$ if and only if
$$u^{\prime} u \in \Q_{\ell}^{*}\II.$$

The following statement will be used in the proof of Theorem \ref{jacobian}.

\begin{thm}
\label{CM}
Suppose that $\End^0(B)$ is a number field of degree $2g$. Then there exists an element $u \in \End(A)$  and a positive integer $q \in \Z$ such that
$$\End^0(A)=\Q[u], \
u^{\prime} u =q, \ u \in \Gp(V_{\ell}(A), e_{\lambda,\ell})$$
and the characteristic polynomial
$P_u(t)=\det(t\II-u, V_{\ell}(B))$ of $u$ has no multiple roots.

In addition, the centralizer $\ZZ(u)_0$ of $u$ in $\End_{\Z_{\ell}}(T_{\ell}(B)) \subset \End_{\Q_{\ell}}(V_{\ell}(B))$
coincides with $\End(A)\otimes\Z_{\ell}$. 
\end{thm}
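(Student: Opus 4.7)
Since $B$ admits a polarization and $\End^0(B)=F$ is a number field of degree $2\dim B$ over $\Q$, Albert's classification forces $F$ to be a CM field; let $F_0$ be its maximal totally real subfield, so that the Rosati involution attached to $\lambda$ restricts to the nontrivial automorphism $x\mapsto\bar x$ of $F/F_0$.  A dimension count combined with faithfulness of the $F$-action shows that $V_\ell(B)$ is a free $F\otimes_\Q\Q_\ell$-module of rank one, and the classical theory of polarized CM abelian varieties shows that $T_\ell(B)$ is a free $\End(B)\otimes\Z_\ell$-module of rank one.  These two rank-one freeness facts will drive the proof.

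\textbf{Constructing a norm-one generator.} Pick any $\alpha\in F$ with $\Q[\alpha]=F$; in particular $\alpha\notin F_0$ and $\alpha\neq\bar\alpha$.  For each $n\in\Z$ with $\bar\alpha+n\neq 0$, set $u_n:=(\alpha+n)/(\bar\alpha+n)\in F^{*}$; by construction $u_n\bar u_n=1$.  The key step is to verify that $\Q[u_n]=F$ for all but finitely many $n$.  The defining identity rearranges to $u_n\bar\alpha-\alpha=n(1-u_n)$, so if $u_n\in E$ for some proper subfield $E\subsetneq F$ then $u_n\bar\alpha-\alpha\in E$; differencing for two distinct $n_1,n_2$ yields $(u_{n_1}-u_{n_2})\bar\alpha\in E$.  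A short calculation gives $u_{n_1}\neq u_{n_2}$ (else $\alpha=\bar\alpha$), forcing $\bar\alpha\in E$ and then $\alpha\in E$, contradicting $\Q[\alpha]=F$.  Since $F$ has only finitely many subfields, only finitely many $n$ are excluded.

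\textbf{Integralization and verification.} Fix such an $n$ and choose a positive integer $c$ so that $u:=cu_n\in\End(B)$; this is possible because $\End(B)\otimes\Q=F$.  Then $\Q[u]=\Q[u_n]=F$ and $u'u=u\bar u=c^{2}=:q\in\Z_{>0}$.  Because $u'u=q\in\Q_\ell^{*}$, the criterion recorded just before the theorem places $u$ in $\Gp(V_\ell(B),e_{\lambda,\ell})$.  Since $V_\ell(B)$ is free of rank one over $F\otimes\Q_\ell$, the characteristic polynomial of $u$ on $V_\ell(B)$ coincides with the characteristic polynomial of multiplication by $u$ on $F$ over $\Q$, i.e.\ with the minimal polynomial of $u$; separability of $F/\Q$ yields the absence of multiple roots.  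For the centralizer, $u$ generates $F\otimes\Q_\ell$ as a $\Q_\ell$-algebra, so its commutant in $\End_{\Q_\ell}(V_\ell(B))$ equals $F\otimes\Q_\ell$; hence $\ZZ(u)_0$ is the multiplier ring $\{w\in F\otimes\Q_\ell\mid wT_\ell(B)\subset T_\ell(B)\}$, and the rank-one freeness of $T_\ell(B)$ over $\End(B)\otimes\Z_\ell$ identifies this with $\End(B)\otimes\Z_\ell$.

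\textbf{Main obstacle.} The most delicate input is the rank-one freeness (equivalently, properness) of $T_\ell(B)$ over $\End(B)\otimes\Z_\ell$ at primes $\ell$ dividing the conductor of $\End(B)$ in the maximal order of $F$; at such primes the local order is not Dedekind and a rank-one torsion-free module need not be invertible.  One handles this by exploiting the self-duality of $T_\ell(B)$ under the pairing $e_{\lambda,\ell}$ together with the compatibility of the Rosati involution with $\lambda$, or alternatively by a CM-lifting argument reducing to the complex-analytic case and Shimura--Taniyama theory.
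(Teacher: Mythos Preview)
Your proof is correct, and its overall architecture matches the paper's: identify $F=\End^0(B)$ as a CM field with Rosati involution equal to complex conjugation, produce a norm-one generator of $F$ over $\Q$, scale it into $\End(B)$, and then compute the centralizer.  Two points of comparison are worth making.

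First, your construction of the norm-one generator via $u_n=(\alpha+n)/(\bar\alpha+n)$ together with the pigeonhole over proper subfields is genuinely different from the paper's Lemma~\ref{quadratic}, which builds the generator by an explicit rational parametrization of norm-one elements in $F_0(\sqrt\delta)$ followed by a separate search for an auxiliary $\beta_0\in F_0$ with $Q(\delta\beta_0^2)=F_0$.  Your argument is shorter and arguably more transparent; the paper's has the minor advantage that it is phrased uniformly over an arbitrary base field $Q$ of characteristic zero, which is reused later with $Q=\Q_\ell$.

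Second---and this is where you make life harder for yourself than necessary---the centralizer computation does \emph{not} require the rank-one freeness of $T_\ell(B)$ over $\End(B)\otimes\Z_\ell$.  What you actually use is the equality
\[
\bigl(\End^0(B)\otimes_\Q\Q_\ell\bigr)\cap\End_{\Z_\ell}(T_\ell(B))=\End(B)\otimes\Z_\ell,
\]
i.e., that the multiplier ring of $T_\ell(B)$ inside $F_\ell$ is exactly $R_\ell$.  This follows directly from the standard fact that the injection $\End(B)\otimes\Z_\ell\hookrightarrow\End_{\Z_\ell}(T_\ell(B))$ has $\Z_\ell$-torsion-free cokernel (equivalently, $\End(B)$ is saturated in $\End^0(B)$: if $n\phi\in\End(B)$ for some $n\geq 1$ then $\phi\in\End(B)$, because $n\phi$ kills $B[n]$ and hence factors through multiplication by $n$).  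The paper simply cites \cite[Sect.~4, Th.~5(1)]{SerreTate} for this intersection and never invokes freeness of $T_\ell(B)$ over $R_\ell$.  Your ``Main obstacle'' is therefore self-imposed, and the suggested workarounds (self-duality under $e_{\lambda,\ell}$, CM-lifting) are unnecessary.  Once you replace the freeness claim by the saturation statement, your argument goes through with no residual difficulty.
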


In the course of the proof of Theorem \ref{CM} we will use the following statement that will be proven at the end of this section. (See also
\cite[Sect. 4]{ZarhinMMO}.)

\begin{lem}
\label{quadratic}
Let $Q$ be a field of characteristic zero, $F_0/Q$ a finite algebraic field extension and $F/F_0$ a quadratic field extension.
Let $\tau \in \Gal(F/F_0)$ be the only nontrivial element (involution) of the Galois group of $F/F_0$. Then there exists
$u \in F$ such that $F=Q[u]$ and $u \cdot \tau u=1$.
\end{lem}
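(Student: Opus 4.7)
The plan is to exhibit a one-parameter family of norm-$1$ elements and then use a Zariski-density argument to pick a generator.

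Since $\fchar(Q)=0$, the quadratic extension $F/F_0$ is separable, so we may write $F=F_0(\alpha)$ with $\alpha^2=a\in F_0^*$ and $\tau(\alpha)=-\alpha$. For each $c\in F_0$ set
$$u_c=\frac{c+\alpha}{c-\alpha}\in F;$$
this is well-defined because $\alpha\notin F_0$, and a direct computation gives $u_c\cdot\tau(u_c)=1$. It remains to choose $c$ so that $Q[u_c]=F$.

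Because $F/Q$ is finite separable, it has only finitely many intermediate fields; let $L_1,\dots,L_r$ be the proper $Q$-subfields of $F$. The first key step is the claim that for each $i$ the set of ``bad'' parameters $B_i:=\{c\in F_0:u_c\in L_i\}$ is a proper subset of $F_0$. Indeed, if every $u_c$ lay in $L_i$, then $u_c-1=2\alpha/(c-\alpha)\in L_i$ for all $c$, whence $(c-\alpha)/\alpha=c/\alpha-1\in L_i$, so $c/\alpha\in L_i$ for every $c\in F_0$. Taking $c=1$ gives $\alpha^{-1}\in L_i$, hence $\alpha\in L_i$; then $c=(c/\alpha)\cdot\alpha\in L_i$ for every $c\in F_0$, so $F_0\subseteq L_i$, and therefore $F=F_0(\alpha)\subseteq L_i$, contradicting the properness of $L_i$.

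The second step is to promote these finitely many proper subsets into a proper subset of $F_0$. Viewing $F_0$ as affine $n$-space over $Q$ (with $n=[F_0:Q]$), the map $c\mapsto u_c$ is a $Q$-morphism, and each condition $u_c\in L_i$ cuts out a closed $Q$-subvariety $V_i\subseteq F_0$; by the previous paragraph $V_i\subsetneq F_0$, so $V_i$ has dimension $<n$. Since $Q$ has characteristic zero it is infinite, so the $Q$-points of the irreducible variety $F_0\cong\mathbb{A}^n_Q$ are not covered by the finite union $\bigcup_i V_i$. Pick any $c\in F_0\setminus\bigcup_i V_i$; then $Q[u_c]$ is a $Q$-subfield of $F$ containing $u_c$, hence is not contained in any $L_i$, so $Q[u_c]=F$, as required.

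The main (mild) obstacle is verifying the genericity step cleanly: one has to observe that a proper closed subvariety of $\mathbb{A}^n_Q$ over an infinite field cannot exhaust all $Q$-rational points, and that the condition ``$u_c\in L_i$'' really is algebraic in $c$, which follows at once by expanding $u_c$ in a fixed $Q$-basis of $F$ after clearing the denominator $c^2-a\in F_0^*$.
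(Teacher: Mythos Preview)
Your argument is correct. Both you and the paper parametrize the norm-one elements of $F$ over $F_0$ in the same Hilbert-90 fashion: writing $F=F_0(\alpha)$ with $\alpha^2\in F_0$, the paper's element $\beta$ is (up to a harmless sign) exactly your $u_c$ with $c=1/\beta_0$. The genuine difference lies in how the parameter is chosen. The paper first reduces to finding $\beta_0\in F_0$ with $F_0=Q(\delta\beta_0^2)$ and then, taking a primitive element $\gamma$ of $F_0/Q$, uses a pigeonhole argument on the finitely many subfields among $Q(\delta(\gamma+i)^2)$ for integer $i$ to extract a suitable $\beta_0$; afterwards it verifies by hand that $Q[\beta]=F$. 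You instead invoke the finiteness of intermediate fields of $F/Q$ directly, show that each proper subfield cuts out a proper Zariski-closed locus of parameters (your step~3 is a clean contradiction argument), and then avoid the finite union using that $Q$ is infinite. Your route is more conceptual and sidesteps the explicit manipulations with $\gamma+i$; the paper's route is more constructive in that it pins down an explicit shift. One small remark on your last paragraph: the denominator you actually clear to get polynomial conditions over $Q$ is $N_{F_0/Q}(c^2-a)\in Q[c_1,\dots,c_n]$ rather than $c^2-a\in F_0$ itself, but since $a$ is a non-square in $F_0$ this norm is nowhere zero on $F_0(Q)$ and the conclusion stands.
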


\begin{proof}[Proof of Theorem \ref{CM}]
It follows from Albert's classification \cite{Mumford} (see also \cite{Oort}) that
the field $F:=\End^0(B)$ is a CM field and the Rosati involution coincides with the complex conjugation
$z \mapsto \bar{z}$ on $F$ and $R:=\End(B)$ is an order in $F$. Recall that $F$ is a purely imaginary quadratic extension 
of its totally real  number subfield $F_0$ and the complex conjugation is the only nontrivial element of the Galois group of $F/F_0$.

We have
$$F_{\ell}:=F\otimes \Q_{\ell}=\End^0(B)\otimes_{\Q}\Q_{\ell}\subset \End_{\Q_{\ell}}(V_{\ell}(B)).$$
Clearly, all elements of the commutative semisimple $\Q_{\ell}$-algebra $F_{\ell}$ act as semisimple linear operators in $V_{\ell}(B)$. 
The $F_{\ell}$-module $V_{\ell}(A)$ is {\sl free of rank $1$} \cite[Sect. 4, Th. 5(1)]{SerreTate}. This implies that
$F_{\ell}$ coincides with its own centralizer $\End_{F_{\ell}}(V_{\ell}(A))$ in $\End_{\Q_{\ell}}(V_{\ell}(A))$.
On the other hand, the intersection $F_{\ell}\bigcap \End_{\Z_{\ell}}(T_{\ell}(A))$ coincides
with
$$R_{\ell}:=R\otimes \Z_{\ell}=\End(B)\otimes\Z_{\ell}$$
 \cite[Sect. 4, Th. 5(1)]{SerreTate}. 

Suppose  that we have constructed an element $u \in R= \End(B)$ such that
$F=\End^0(B)=\Q[u]$ and $u^{\prime} u =q$ for some positive integer $q$.
This implies that the centralizer $\ZZ(u)$ of $u$ in $\End_{\Q_{\ell}}(V_{\ell}(B))$ coincides with the 
centralizer $\End_{F_{\ell}}(V_{\ell}(A))$ of $F_{\ell}$, i.e., equals $F_{\ell}$.  It follows that
 the centralizer $\ZZ(u)_0$ of $u$ in $ \End_{\Z_{\ell}}(T_{\ell}(B))$ coincides with the the intersection $F_{\ell}\bigcap \End_{\Z_{\ell}}(T_{\ell}(B))$,
i.e., equals $R_{\ell}$. In addition,  since $F_{\ell}$ is the centralizer of $u$ in $\End_{\Q_{\ell}}(V_{\ell}(B))$ and
$$\dim_{\Q_{\ell}}(F_{\ell})=2g=\dim_{\Q_{\ell}}(V_{\ell}(B)),$$
the characteristic polynomial $P_u(t)$ of $u$ has {\sl no} multiple roots. We have
$$e_{\lambda,\ell}(ux,uy)=e_{\lambda,\ell}(x,u^{\prime}uy)=   e_{\lambda,\ell}(x,q\cdot y) =q \cdot e_{\lambda,\ell}(x,y)$$
 for all $x,y \in V_{\ell}(B)$. This implies that
$$u \in  \Gp(V_{\ell}(A), e_{\lambda,\ell}).$$
Now let us construct such an $u$.  Applying Lemma \ref{quadratic} (to $Q=\Q$), we obtain the existence of $u_1 \in F$
with $\Q[u_1]=F$ and $u_1^{\prime}\cdot u_1=1$. Then there is a positive integer $m$ such that $u:=mu_1$ lies in $R$. Clearly,
$$\Q[b]=\Q[u_1]=F,  \ u^{\prime}=m u_1^{\prime}, \ u^{\prime} \cdot u =m^2 u_1^{\prime}\cdot u_1= m^2\cdot 1=m^2.$$
Now one has only to put $q=m^2$.
\end{proof}

\begin{proof}[Proof of Lemma \ref{quadratic}]
Recall that for each $u \in F$ the $Q$-subalgebra $Q[u]$ of $F$ generated by $u$ is actually a subfield, i.e.,
coincides with the (sub)field $Q(u)$.

Since $F/F_0$ is quadratic, $F=F_0(\sqrt{\delta})$ for some {\sl nonzero} $\delta\in F_0$.  We have
$$F=F_0+ F_0\cdot \sqrt{\delta}, \ \tau(\sqrt{\delta})=-\sqrt{\delta}$$
and $F_0$ coincides with the subfield of $\tau$-invariants in $F$.

Suppose that there is a nonzero  $\beta_0 \in F_0$
such that 
$F_0=Q(\delta \beta_0^2)$. 
Replacing if necessary, $\beta_0$ by $2\beta_0$, we may and will assume
that
$$\delta\beta_0^2+1\ne 0.$$
Let us put
$$\beta=\frac{\delta\beta_0^2-1}{\delta\beta_0^2+1}+\frac{2\beta_0}{\delta\beta_0^2+1}\cdot \sqrt{\delta}.$$
Clearly,  
 $$\beta \not\in F_0, \ \tau(\beta) =\frac{\delta\beta_0^2-1}{\delta\beta_0^2+1}-\frac{2\beta_0}{\delta\beta_0^2+1}\cdot \sqrt{\delta}, \ \tau(\beta)\cdot \beta=1$$
 and therefore $Q(\beta)$ contains $\tau(\beta)=1/\beta$, which implies that it contains both $\frac{\delta\beta_0^2-1}{\delta\beta_0^2+1}$ and 
$\frac{2\beta_0}{\delta\beta_0^2+1}\cdot \sqrt{\delta}$. This implies that $Q(\beta)$ contains $\delta\beta_0^2$ and therefore contains $Q(\delta\beta_0^2)$.
Since $Q(\delta\beta_0^2)=F_0$,  the subfield  $Q(\beta)$ contains $F_0$ and we have
$$F_0 \subset  Q(\beta)\subset F.$$
Since $F_0$ does not contain $\beta$,   $F_0 \ne Q(\beta)$ and therefore $Q(\beta)=F$.  We have
$$Q[\beta]=Q(\beta)=F.$$
This ends the proof if we find $\beta_0 \in F_0$ with
$$F_0=Q(\delta \beta_0^2).$$

Now let us construct such a $\beta_0$. If $F_0 =Q$ we may take any
$$\beta_0 \in Q=F, \ \beta_0 \ne 0, \ \beta_0^2 \ne -\frac{1}{\delta}.$$
Now suppose that $F_0 \ne Q$. Since $F_0/Q$ is separable, there is $\gamma \in F_0$ with $F_0=Q(\gamma)$.  
Clearly, $\gamma \not \in \Q\subset Q$; in particular,  $\gamma \ne 0$.  Since  separable  $F_0/Q$ contains only finitely many field subextensions of $Q$, there are two {\sl distinct}
positive integers $i,j \in \Z\subset Q$ such that  the subfields
$Q\left(\delta(\gamma+i)^2\right)$ and $Q\left(\delta(\gamma+j)^2\right)$ do coincide.
(Notice that $i^2 \ne j^2$.)
This implies that 
$2\delta(j-i)\gamma$ lies in $Q(\delta(\gamma+i)^2)$, i.e.,
$$\delta \cdot \gamma \in Q\left(\delta(\gamma+i)^2\right)=Q\left(\delta(\gamma+j)^2\right).$$
This implies that both
$$\frac{ (\gamma+i)^2}{\gamma}=\frac{\delta \cdot (\gamma+i)^2}{\delta \cdot \gamma}  \text{ and } \frac{ (\gamma+j)^2}{\gamma}=
\frac{\delta \cdot (\gamma+j)^2}{\delta \cdot \gamma}$$
lie in $Q(\delta(\gamma+i)^2)$.
This implies that
$$\left(2i-2j\right)+ \frac{i^2-j^2}{\gamma}=\frac{ (\gamma+i)^2}{\gamma}- \frac{ (\gamma+j)^2}{\gamma} \in Q\left(\delta(\gamma+i)^2\right).$$
Since $i^2 \ne j^2$, we conclude that $1/\gamma$ lies in $Q\left(\delta(\gamma+i)^2\right)$ and therefore
$$Q(\gamma)=F_0 \supset Q\left(\delta(\gamma+i)^2\right)\supset Q(1/\gamma)=Q(\gamma)=F_0.$$
This implies that $Q\left(\delta(\gamma+i)^2\right)=F_0$ and we may put $\beta_0=\gamma+i$.
\end{proof}

We finish this section by the following elementary (and probably well-known) statement that will be used later in Example \ref{GL}.

\begin{lem}
\label{generator}
Let $Q$ be a field of characteristic zero and  $C$ a finite-dimensional commutative semisimple $Q$-algebra. Then there exists an invertible element $u$ of $C$ such that
$C=Q[u]$.
\end{lem}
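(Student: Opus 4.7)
The plan is to reduce to the case of a product of fields via the Wedderburn structure theorem and then build a primitive element componentwise. Since $C$ is a finite-dimensional commutative semisimple $Q$-algebra, it splits as a direct product
$$C = F_1 \times \cdots \times F_r$$
of finite field extensions $F_i/Q$. As $Q$ has characteristic zero, each $F_i/Q$ is separable, so by the primitive element theorem we may pick $\alpha_i \in F_i$ with $F_i = Q(\alpha_i)$; let $m_i(x)\in Q[x]$ denote the minimal polynomial of $\alpha_i$, so $\deg m_i = [F_i:Q]$ and $\dim_Q C = \sum_i \deg m_i$.

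For any tuple $(c_1,\dots,c_r)\in Q^r$ set $u_i = \alpha_i+c_i$. Then $F_i = Q(u_i)$ still, and the minimal polynomial of $u_i$ is $\widetilde m_i(x):=m_i(x-c_i)$, whose roots in $\bar Q$ are the translates by $c_i$ of the roots of $m_i$. The key observation (via the Chinese Remainder Theorem applied to $Q[x]$) is that for $u = (u_1,\dots,u_r)\in C$ one has $Q[u] \cong Q[x]/\bigl(\mathrm{LCM}(\widetilde m_1,\dots,\widetilde m_r)\bigr)$, so
$$\dim_Q Q[u] \;=\; \deg\bigl(\mathrm{LCM}(\widetilde m_1,\dots,\widetilde m_r)\bigr),$$
and $Q[u]=C$ holds precisely when the polynomials $\widetilde m_i$ are pairwise coprime in $Q[x]$.

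Pairwise coprimality of the $\widetilde m_i$ amounts to the finitely many conditions $c_i - c_j \neq \beta - \alpha$, where $\alpha$ and $\beta$ range over the (finite) sets of roots of $m_i$ and $m_j$. Since $Q$ has characteristic zero and is therefore infinite, we may successively pick $c_1,\dots,c_r\in Q$ avoiding all these finitely many forbidden differences; this produces $u$ with $Q[u]=C$. Finally, invertibility of $u$ is the condition $u_i\ne 0$ for each $i$, equivalently $0$ is not a root of any $\widetilde m_i$. If some $u_i$ happens to vanish, replace $u$ by $u+c$ for a further $c\in Q$; this preserves $Q[u+c]=Q[u]=C$ and translates all roots of the minimal polynomial by $c$, so one only needs to avoid the finitely many values of $c\in Q$ sending $0$ into the root set of $\prod_i \widetilde m_i$, which is again possible since $Q$ is infinite. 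There is no serious obstacle — the entire argument rests on combining the primitive element theorem componentwise with the infiniteness of $Q$ to kill a finite list of bad coincidences.
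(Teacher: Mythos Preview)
Your argument is correct and follows essentially the same route as the paper's proof: decompose $C$ as a product of finite separable extensions, pick primitive elements, translate them so that their minimal polynomials become pairwise coprime (the paper uses integer shifts and the observation that distinct monic irreducibles are automatically coprime, while you phrase it as avoiding finitely many root differences), and then invoke the Chinese Remainder Theorem. The only cosmetic difference is that the paper bundles invertibility into the same step by demanding the shifted polynomials not vanish at zero, whereas you handle it by a further translation at the end.
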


\begin{proof}
It is well known that commutative semisimple $C$ splits into a finite direct sum
$$C=\oplus_{i=1}^r C_i$$
where each $C_i$  is an overfield of $Q$. It is also clear that $C_i/Q$ is a finite algebraic field extension. Since we live in characteristic zero,
each $C_i/Q$ is separable and therefore there exists nonzero $z_i \in C_i$ such that $C_i=Q[z_i]$.  Let $\mathcal{P}_i(t)\in Q[t]$ be the minimal polynomial of $z_i$ over $Q$. By definition, $\mathcal{P}_i(t)$ is an irreducible monic polynomial of degree $[C_i:Q]$.  We have
$$\Z\subset \Q\subset Q\subset C_i.$$
We may choose integers $n_i \in \Z$ in such a way that all 
$\mathcal{P}_i(t+n_i)$ are distinct and do {\sl not} vanish at zero; in particular, they all are monic irreducible and therefore relatively prime to each other.  Clearly, $\mathcal{P}_i(t+n_i)$ is the minimal polynomial of $z_i-n_i$ over $K$. Clearly, $Q_j=Q[z_i]=Q[z_i-n_i]$.
This implies that the field $C_i$ is isomorphic as $Q$-algebra to the quotient $Q[t]/\mathcal{P}_i(t+n_i)Q[t]$. This  implies that the $Q$-algebra
$Q[t]/\{\prod_{i=1}^r \mathcal{P}_i(t+n_i)\}Q[t]$ is isomorphic to the direct sum $\oplus_{i=1}^r C_i=C$. Now one may take as $u$ the image of $t$ in $C$.
\end{proof}

\section{Trace forms}
\label{NT}
\begin{sect}
\label{NTgeneral}
Let $\ell$ be a prime.
Let $F_0/\Q_{\ell}$ be a field extension of finite degree $g$.
 Let $\Oc_0=\Oc_{0,\ell}$ be the ring of integers of the $\ell$-adic field  $F_{0,\ell}:=F_0$, which carries the natural structure of a free $\Z_{\ell}$-module of rank $g$. Let us fix an {uniformizer} $\pi \in \Oc_0$ that generates the maximal ideal in $\Oc_0$. We write
$$\Tr_0:=\Tr_{F_0/\Q_{\ell}}: F_0 \to \Q_{\ell}$$
for the ($\Q_{\ell}$-linear) trace map from $F_0$ to $\Q_{\ell}$. It is well known that the symmetric $\Q_{\ell}$-bilinear {\sl trace form}
$$B_{\Tr}: F_0 \times F_0 \to \Q_{\ell}, \ x,y \mapsto \Tr_0(xy)$$
is nondegenerate. This means  the homomorphism of $\Q_{\ell}$-vector spaces
$$\phi_{\Tr}: F_0 \to \Hom_{\Q_{\ell}}(F_0,\Q_{\ell})$$
that assigns to each $a \in F_0$ the $\Q_{\ell}$-linear map
$$B_{\Tr}(a,?): F_0 \to \Q_{\ell}, \ x \mapsto B_{\Tr}(a,x)=\Tr_0(ax)$$
is an isomorphism. Recall that the natural homorphism of $\Z_{\ell}$-algebras
$$\Oc_0\otimes_{\Z_{\ell}}\Q_{\ell} \to F_0,  \ x\otimes c\mapsto c\cdot x$$
is an isomorphism. This implies that the restriction map 
$$\Hom_{\Q_{\ell}}(F_0,\Q_{\ell}) \to\Hom_{\Z_{\ell}}(\Oc_0,\Q_{\ell})$$
is an isomorphism of $\Z_{\ell}$-modules. (Further we will identify these modules, using this isomorphism.) We have
$$\Hom_{\Z_{\ell}}(\Oc_0,\Z_{\ell})\subset \Hom_{\Z_{\ell}}(\Oc_0,\Q_{\ell})=\Hom_{\Q_{\ell}}(F_0,\Q_{\ell}).$$
The preimage
$$\mathcal{D}^{-1}:=\phi_{\Tr}^{-1}(\Hom_{\Z_{\ell}}(\Oc_0,\Z_{\ell}))\subset F_0$$
is the {\sl inverse different}, which is a fractional ideal in $F_0$ that contains $\Oc_0$ \cite[Ch. III, Sect. 3]{SerreCL}.
Since the obvious $\Z_{\ell}$-bilinear pairing of free $\Z_{\ell}$-modules of rank $g$
$$\Oc_0\times \Hom_{\Z_{\ell}}(\Oc_0,\Z_{\ell}) \to \Z_{\ell}$$
is unimodular, the  $\Z_{\ell}$-bilinear pairing of free $\Z_{\ell}$-modules of rank $g$
$$\Oc_0 \times \mathcal{D}^{-1} \to \Z_{\ell}, \ (x,y) \mapsto \Tr_0(xy)$$
is also unimodular.  Notice that there is a nonnegative integer $d$ such that
$$ \mathcal{D}^{-1}=\pi^{-d}\Oc_0\subset  F_0.$$
This implies that the symmetric  $\Z_{\ell}$-bilinear pairing 
$$\tilde{B}_{\Tr}:\Oc_0  \times \Oc_0  \to \Z_{\ell}, \ x,y \mapsto \Tr_0(\pi^{-d}xy)$$
is unimodular.
\end{sect}

Let $T=T_{\ell}$ be a free $\Oc_0$-module of rank $2$ provided with an alternating $\Oc_0$-bilinear unimodular form
$$e_0: T \times T \to \Oc_0.$$
Since $T$ has rank $2$, such a form exists and unique, up to multiplication by an element of $\Oc_0^{*}$. This implies that
if $u$ is an automorphism of $T$ then
$$e_0(ux,uy)=\det(u)\cdot e_0(x,y) \ \forall x,y \in T.$$
Let us consider the $2$-dimensional $\Oc_0\otimes_{\Z_{\ell}}\Q_{\ell}=F_0$-vector space
$$V=V_{\ell}:=T\otimes_{\Z_{\ell}}\Q_{\ell}$$
and extend $e_0$ by $F_0$-linearity to the alternating nondegenerate $F_0$-bilinear form
$$V \times V \to F_0,$$ 
which we continue to denote $e_0$.
Clearly, if $u \in \Aut_{F_0}(V)$ then
$$e_0(ux,uy)=\det(u)\cdot e_0(ux,uy) \ \forall x,y \in V.$$
Here and above
$$\det: \Aut_{F_0}(V)\cong \GL(2,F_0) \to F_0^{*}$$
is the determinant homomorphism.

\begin{lem}
\label{unimod}
The alternating $\Z_{\ell}$-bilinear form
$$e=e_{\ell}: T \times T \to \Z_{\ell},  \ x,y \mapsto  \Tr_0(\pi^{-d} e_0(x,y))$$
is unimodular.
\end{lem}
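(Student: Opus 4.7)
The plan is to reduce the statement to the two unimodularity results already in hand: that $e_0$ is unimodular as an $\Oc_0$-valued form on the rank-$2$ free $\Oc_0$-module $T$, and that the trace pairing $\tilde{B}_{\Tr}(x,y)=\Tr_0(\pi^{-d}xy)$ is a unimodular $\Z_{\ell}$-valued form on $\Oc_0$. The form $e$ is manifestly $\Z_{\ell}$-bilinear, and it is alternating because $e_0$ is alternating (multiplying by the fixed scalar $\pi^{-d}$ and applying $\Tr_0$ cannot spoil that). So the only point is the unimodularity of the Gram matrix.

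I would pick an $\Oc_0$-basis $\{t_1,t_2\}$ of $T$. Since $e_0$ is alternating and $\Oc_0$-unimodular on a rank-$2$ module, $e_0(t_1,t_1)=e_0(t_2,t_2)=0$ and $a:=e_0(t_1,t_2)\in \Oc_0^{*}$. Next, fix a $\Z_{\ell}$-basis $\omega_1,\dots,\omega_g$ of $\Oc_0$, so that $\{\omega_i t_1\}_{i=1}^g\cup\{\omega_j t_2\}_{j=1}^g$ is a $\Z_{\ell}$-basis of $T$. A direct computation gives
\[
e(\omega_i t_1,\omega_{i'} t_1)=0,\qquad e(\omega_j t_2,\omega_{j'} t_2)=0,
\]
\[
e(\omega_i t_1,\omega_j t_2)=\Tr_0\bigl(\pi^{-d}\,a\,\omega_i\omega_j\bigr)=\tilde{B}_{\Tr}(\omega_i,\,a\omega_j).
\]
Hence the Gram matrix of $e$ in this basis has the block form $\begin{pmatrix} 0 & M \\ -M^{\top} & 0 \end{pmatrix}$ with $M_{ij}=\tilde{B}_{\Tr}(\omega_i,a\omega_j)$.

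Because $a\in\Oc_0^{*}$, the family $\{a\omega_j\}_{j=1}^g$ is again a $\Z_{\ell}$-basis of $\Oc_0$. Unimodularity of $\tilde{B}_{\Tr}$ (established in the paragraph preceding the lemma) therefore forces $\det(M)\in\Z_{\ell}^{*}$. Consequently the determinant of the Gram matrix of $e$ equals $(-1)^g\det(M)^2$, which is a unit in $\Z_{\ell}$, and $e$ is unimodular. There is no genuine obstacle: the only thing to be careful about is keeping the $\Oc_0$-action in $e_0$ separated from the $\Z_{\ell}$-trace pairing, so that unimodularity of each ingredient can be invoked independently on the two halves of the computation.
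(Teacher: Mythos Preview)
Your proof is correct and follows essentially the same approach as the paper: pick an $\Oc_0$-basis of $T$ and reduce to the two inputs already established, namely the $\Oc_0$-unimodularity of $e_0$ and the $\Z_{\ell}$-unimodularity of $\tilde{B}_{\Tr}$. The only cosmetic difference is that you write down the block Gram matrix and show its determinant is a unit, whereas the paper checks directly that every $\Z_{\ell}$-linear functional $l:T\to\Z_{\ell}$ is represented by a unique $z\in T$ via $e$.
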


\begin{proof}
Let $l: T \to \Z_{\ell}$ be a $\Z_{\ell}$-linear map. We need to prove that there is exactly one $z \in T$ such that
$$l(x)=\Tr_0(\pi^{-d} e_0(x,z)) \  \forall x \in T.$$
In order to do that, let us choose a basis $\{f_1,f_2\}$ of the free $\Oc_0$-module $T$. Then $l$ gives rise (and is uniquely determined by) two $\Z_{\ell}$-linear maps
$$l_i: R \to \Z_{\ell}, \  a \mapsto l(a\cdot e_i)$$
for $i=1,2$.
We have
$$l(a_1 f_1+a_2 f_2)= l_1(a_1)+ l_2(a_2) \ \forall a_1,a_2 \in \Oc_0.$$
Since $\tilde{B}_{\Tr}$ is unimodular, there exists exacltly one $c_i \in \Oc_0$ with
$$l_i(a)=\tilde{B}_{\Tr}(a,c_i) \ \forall a \in \Oc$$
for $i=1,2$.
This implies that
$$l(a_1 f_1+a_2 f_2)=\tilde{B}_{\Tr}(a_1,c_1)+\tilde{B}_{\Tr}(a_2,c_2)=\Tr_0(\pi^{-d}\cdot a_1 c_1)+\Tr_0(\pi^{-d}\cdot a_2 c_2)=$$
$$\Tr_0(\pi^{-d}\cdot [a_1c_1+a_2 c_2]).$$
Since $e_0$ is unimodular, there is exactly one $z\in T$ with
$$e_0(f_1,z)=c_1, \ e_0(f_2,z)=c_2.$$
This implies that
$$e_0(a_1 f_1+a_2 f_2,z)=a_1 c_1 +a_2 c_2$$
and therefore
$$l(a_1 f_1+a_2 f_2,z)=\Tr_0[\pi^{-d}\cdot e_0(a_1 f_1+a_2 f_2,z)]=e(a_1 f_1+a_2 f_2, z).$$
\end{proof}

\begin{sect}
\label{NTalgebra}
Let $C=C_{\ell}$ be a $2$-dimensional commutative semisimple $F_0$-algebra. Then $C$ is either a quadratic field extension $F$  of $F_0$ or is isomorphic (as $F_0$-algebra) to the
direct sum $F_0\oplus F_0$.
Suppose that  $R=R_{\ell} \subset C$ is an  $\Oc_0$-subalgebra of $C$  that is a free $\Oc_0$-module of rank $2$. Clearly, the natural homomorphism of $\Oc_0$-algebras
$$R\otimes_{\Oc_0}F \to C,  \ x\otimes a \mapsto ax$$
is an isomorphism.

Suppose that $C=F$ is field (that is a quadratic extension of $F_0$). Then $\Oc_0 \subset R \subset \Oc$ where $\Oc$ is the ring of integers in the $\ell$-adic field $F$. This implies that there is a nonnegative integer $i$ such that
$$R=R_i:=\Oc_0+\pi^i \Oc\subset \Oc=R_0.$$
Conversely, for any nonnegative integer $i$ the $\Oc_0$-(sub)algebra $\Oc_0+\pi^i \Oc\subset C$ is a free $\Oc_0$-module of rank $2$.

Suppose that $C:=F_0\oplus F_0$ and let us put 
$$\Oc:=\Oc_0\oplus \Oc_0 \subset F_0\oplus F_0=C.$$
 We view $\Oc_0$ as the $\Oc_0$-subalgebra of $\Oc$ via the diagonal embedding. Then 
$$\Oc_0 \subset R \subset \Oc.$$
 This implies that there is a nonnegative integer such that
$$R=R_i:=\Oc_0+\pi^i \Oc\subset \Oc=R_0.$$
Conversely, for any nonnegative integer $i$ the $\Oc$-(sub)algebra $\Oc_0+\pi^i \Oc\subset C$ is a free $\Oc_0$-module of rank $2$.


Let us fix an isomorphism $T \cong R$ of free $\Oc_0$-modules of rank $2$. This provides $T$ with the natural structure of the free $R$-module of rank $1$ and gives
rise to the embedding of $R$-algebras
$$R \hookrightarrow \End_{\Oc_0}(T),$$
which extends by  $F_0$-linearity the embedding of $F_0$-algebras
$$C=R\otimes_{\Oc_0}F_0  \hookrightarrow \End_{\Oc_0}(T)\otimes_{\Oc_0}F=\End_{F_0}(T\otimes_{\Oc_0}F)=\End_{F_0}(V).$$
Further we will identify $C$ with its image in $\End_{F_0}(V)$.
Clearly, $V$ becomes a free $C$-module of rank $1$. In particular, the centralizer of $C$ in $\End_{F_0}(V)$ coincides with $C$.  Since $T$ is a free $R$-module of rank $1$,
the centralizer of $C$ in $\End_{\Oc_0}(T)\subset \End_{F_0}(V)$ coincides with
$$R\subset C\subset \End_{F_0}(V).$$
Actually, we can do better and consider $V$ as the $2g$-dimensional $\Q_{\ell}$-vector space and $T$ as the $\Z_{\ell}$-lattice of rank $2g$ in $V$. Indeed,
$C$ is a finite-dimensional semisimple $\Q_{\ell}$-algebra of $\Q_{\ell}$-dimension $2g$ that acts faithfully on the $2g$-dimensional $\Q_{\ell}$-vector space $V$. This implies that
the centralizer
of $C$ even in $\End_{\Q_{\ell}}(V)$ coincides with $C$ and the centralizer of $C$ in $\End_{\Z_{\ell}}(T)\subset \End_{\Q_{\ell}}(V)$ coincides with
$$R\subset C\subset \End_{\Q_{\ell}}(V)$$
(recall that $T$ is a free $R$-module of rank $1$ and therefore the centralizer of $R$ in $\End_{\Z_{\ell}}(T)$ coincides with $R$).
If 
$$u \in C^{*}\subset \Aut_{F_0}(V)$$
and $c=\det(u) \in F_0^{*}$ actually lies in $\Q_{\ell}^{*}$ then
$$e(ux,uy)=\Tr_0(\pi^{-d} e_0(ux,uy))=\Tr_0(\pi^{-d} c \cdot e_0(x,y))=c \cdot \Tr_0(\pi^{-d}  \cdot e_0(x,y))=c \cdot e(x,y)$$
for all $x,y \in V$. This implies that $u$ lies in the group $\Gp(V,e)$ of symplectic similitudes.
\end{sect}

\begin{lem}
\label{generator2}
There exists $$\ub=\ub_{\ell} \in C^{*}= C_{\ell}^{*}$$ that lies in  $\Gp(V,e)=Gp(V_{\ell}, e_{\ell})$  and such that 
$$\Q_{\ell}[\ub_{\ell}]=\Q_{\ell}[\ub]=C=C_{\ell}.$$ In particular, the
centralizer of $\ub_{\ell}$ in  $\End_{\Z_{\ell}}(T_{\ell})\subset \End_{\Q_{\ell}}(V_{\ell})$ coincides with
$$R=R_{\ell}\subset C\subset \End_{\Q_{\ell}}(V_{\ell}).$$
\end{lem}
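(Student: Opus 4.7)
The plan hinges on a single observation from \ref{NTalgebra}: any $u \in C^*$ whose determinant as an $F_0$-linear automorphism of $V$ lies in $\Q_\ell^*$ already lies in $\Gp(V,e)$. Because $V$ is a free $C$-module of rank one, this determinant is just the norm $N_{C/F_0}(u)$. So I would aim to produce $\ub \in C^*$ with $\Q_\ell[\ub] = C$ and $N_{C/F_0}(\ub) \in \Q_\ell^*$; the final statement about the centralizer will then follow automatically from the identifications already established in \ref{NTalgebra}.

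I would split into the two cases for $C$ recalled in \ref{NTalgebra}. If $C = F$ is a quadratic field extension of $F_0$, I would apply Lemma \ref{quadratic} with $Q = \Q_\ell$. It directly yields $\ub \in F$ with $\Q_\ell[\ub] = F$ and $\ub \cdot \tau(\ub) = 1$, where $\tau$ is the nontrivial $F_0$-automorphism of $F$. Then $N_{F/F_0}(\ub) = 1 \in \Q_\ell^*$, so in particular $\ub \in C^*$ and the determinant condition holds.

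If $C = F_0 \oplus F_0$, I would construct $\ub$ directly. Pick $a \in F_0$ with $F_0 = \Q_\ell(a)$, let $\mu \in \Q_\ell[t]$ be its minimal polynomial of degree $g$, and denote the roots of $\mu$ in $\bar\Q_\ell$ by $a_1, \ldots, a_g$. For any $q \in \Q_\ell^*$, a short manipulation of reciprocal polynomials shows that $q/a$ has minimal polynomial $\nu(t) = t^g \mu(q/t)/\mu(0) = \prod_{i=1}^g (t - q/a_i)$. Since $\Q_\ell^*$ is infinite and the set $\{a_i a_j : 1 \le i,j \le g\} \cap \Q_\ell^*$ is finite, I may choose $q$ avoiding it. That choice guarantees $\gcd(\mu,\nu) = 1$, so the minimal polynomial of $\ub := (a, q/a) \in F_0 \oplus F_0$ over $\Q_\ell$ equals $\mu(t)\,\nu(t)$, which has degree $2g = \dim_{\Q_\ell}(C)$. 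Hence $\Q_\ell[\ub] = C$, and clearly $N_{C/F_0}(\ub) = a \cdot q/a = q \in \Q_\ell^*$.

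In both cases the centralizer assertion is immediate: $\Q_\ell[\ub] = C$ implies the centralizer of $\ub$ in $\End_{\Q_\ell}(V)$ coincides with the centralizer of $C$, which was shown in \ref{NTalgebra} to equal $C$ itself, and intersecting with $\End_{\Z_\ell}(T)$ then gives $R$ for the same reason. The only nonformal step is the Case 2 construction, but this reduces to elementary facts about reciprocal polynomials together with the infinitude of $\Q_\ell^*$, and I do not expect it to be a serious obstacle.
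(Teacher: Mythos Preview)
Your proposal is correct and follows the same approach as the paper: reduce to finding $\ub \in C^*$ with $N_{C/F_0}(\ub) \in \Q_\ell^*$, apply Lemma~\ref{quadratic} when $C$ is a field, and in the split case take $\ub = (a,\,q/a)$ with the two coordinates arranged to have coprime minimal polynomials. The only cosmetic difference is in this last step: the paper fixes $q=1$ and scales $a$ into $\ell\Oc_0$ (so that $a^{-1}$ is non-integral and hence has a different minimal polynomial), whereas you fix $a$ and choose $q$ to avoid the finitely many products $a_i a_j$.
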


\begin{proof}
Suppose that $C$ is the quadratic over field $F$ of $F_0$. Let $\tau$ be the only nontrivial element (involution of $\Gal(F/F_0)$.
Then $V$ becomes a one-dimensional vector space over $F$ but we view $V$ as a $2$-dimensional $F_0$-vector space and each $u \in F$ acts on $V$ as a $F_0$-linear
operator thyat is multiplication by $u$.
Then the determinant $\det(u)$ of this operator is the {\sl norm} of $u$  with respect to $F/F_0$, i.e,,
$$\det(u)=u \cdot \tau(u).$$
This implies that if $u \cdot \tau(u)=1$ then $u$ lies in the symplectic group $\Sp(V,e)\subset \Gp(V,e)$.  So, we need to find $\ub\in F^{*}$ with
$$\ub\cdot \tau \ub=1, \ \Q_{\ell}[\ub]=F.$$
But the existence of such $\ub$ is guaranteed by Lemma \ref{quadratic} and we are done.

Suppose that $C=F_0\oplus F_0$.  Let
$$u=(u_1,u_2) \in F_0^{*}\times F_0^{*}=C^{*}.$$
Clearly $\det(u)=u_1u_2 \in  F_0^{*}$. This implies that if  $u_2=u_1^{-1}$ then 
$$\det(u)=u_1 u_2=u_1 u_1^{-1}=1$$
 and $u$ lies in the symplectic group $\Sp(V,e)\subset \Gp(V,e)$. 
By Lemma \ref{generator}, there exists a nonzero $u_1 \in F_0$  with $\Q_{\ell}[u_1]=F_0$. Replacing $u_1$ by $\ell^N u_1$ for sufficiently large positive integer $N$,
we may and will assume that 
$$0 \ne u_1 \in \ell\Oc_0 \subset \pi \Oc_0$$
and therefore $u_1^{-1}\not\in \Oc_0$.
This implies that  the degree $g$  {\sl minimal polynomial} $P_1(t)\in \Q_{\ell}[t]$ of $u_1$ over $\Q_{\ell}$ has coefficients in $\Z_{\ell}$, which is not the case
for  the degree $g$ (monic) {\sl minimal polynomial} $P_2(t)\in \Q_{\ell}[t]$ of $u_2=u_1^{-1}$ over $\Q_{\ell}$. Since both $P_1$ and $P_2$ are monic irreducible over $\Q_{\ell}$, they are relatively prime. This implies that if we put $\ub=(u_1,u_1^{-1}) \in F_0\oplus F_0$ then the $\Q_{\ell}$-(sub)algebra $\Q_{\ell}[\ub]$ of $F_0\oplus F_0$ is isomorphic to the quotient $\Q_{\ell}[t]/P_1(t)P_2(t) \Q_{\ell}[t]$ and therefore has $\Q_{\ell}$-dimension
$$\deg(P_1 P_2)=g+g=2g=\dim_{\Q_{\ell}}(F_0\oplus F_0)$$
and therefore
$$\Q_{\ell}[\ub]=F_0\oplus F_0=C.$$
\end{proof}

\section{Linear Algebraic groups over $\Q_{\ell}$}
\label{linearA}
The content of this section was inspired by exercises in Serre's book \cite[Ch. IV, Sect. 2.2]{Serre}.

\begin{sect}
Let $V$ be a vector space of finite positive dimension $d$ over $\Q_{\ell}$. We write $\II$ for the identity automorphism of $V$.
Let $T$ be a $\Z_{\ell}$-lattice in $V$ of (maximal) rank $d$. 
For every $u \in \End_{\Q_{\ell}}(V)$ we write
$\ZZ(u)$ for its centralizer in $\End_{\Q_{\ell}}(V)$ and $\Q_{\ell}[u]$ for the $\Q_{\ell}$-subalgebra in $\End_{\Q_{\ell}}(V)$ generated by $u$. We have
$$\II, u \in \Q_{\ell}[u]\subset \ZZ(u)\subset  \End_{\Q_{\ell}}(V).$$
Let us consider the intersection 
$$\ZZ(u)_0:=\ZZ(u)\bigcap \End_{\Z_{\ell}}(T)\subset \End_{\Z_{\ell}}(T)\subset \End_{\Q_{\ell}}(V).$$
Clearly, $\ZZ(u)_0$ coincides with the centralizer of $u$ in $\End_{\Z_{\ell}}(T)\subset \End_{\Q_{\ell}}(V)$.
It is also clear that  $\ZZ(u)_0$ is a $\Z_{\ell}$-subalgebra (order) in $\ZZ(u)$ and the natural map
$$\ZZ(u)_0\otimes_{\Z_{\ell}}\Q_{\ell} \to \ZZ(u),  \ u\otimes c \mapsto cu$$
is an isomorphism of $\Q_{\ell}$-algebras. 

 If  $u \in \End_{\Q_{\ell}}(V)$ then we consider its characteristic polynomial
$$P_u(t) =\det(t\II-u, V) \in \Q_{\ell}[t]$$
and define $\Delta(u)\in \Q_{\ell}$ as the {\sl discriminant} of $P_u(t)$.
For each $g \in \Aut_{\Q_{\ell}}(V)$
$$P_{gug^{-1}}(t)=P_u(t), \ \Delta(gug^{-1})=\Delta(u).$$
 The polynomial $P_u(t)$ has {\sl no} multiple roots if and only if $\Delta(u)\ne 0$. If this is the case
then $u$ is a semisimple (diagonalizable over $\bar{\Q}_{\ell}$) linear operator in $V$, the subalgebra 
$$\Q_{\ell}[u]\subset \End_{\Q_{\ell}}(V)$$
 is a commutive semisimple $\Q_{\ell}$-(sub)algebra of $\Q_{\ell}$-dimension $d$, which coincides with
$\ZZ(u)$.

Let 
 $\GG \subset \GL(V)$ be a connected reductive linear (sub)group of positive dimension.
Clearly 
$\GG(\Q_{\ell})$ is a closed subgroup of $\Aut_{\Q_{\ell}}(V)$ with respect to $\ell$-adic topology. One may view 
$$\Delta: u \mapsto \Delta(u)$$
 as a regular function on the affine algebraic variety $\GG$. Let us assume that
$\Delta$ is {\sl not} identically zero on $\GG$. 

\end{sect}


\begin{lem}
\label{DeltaZero}
 Let $G$ be an open compact subgroup in  $\GG(\Q_{\ell})$.  Then the subset
$$G_{\Delta}:=G \bigcap \{\Delta=0\}\subset G$$
has measure zero with respect to the Haar measure on $G$.
\end{lem}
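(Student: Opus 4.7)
The plan is to pass to local analytic coordinates on the smooth $\Q_\ell$-analytic manifold $\GG(\Q_\ell)$, and then reduce the statement to the standard fact that a nonzero analytic function on an open subset of $\Q_\ell^d$ has a zero set of Haar measure zero.

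First, since $\GG$ is a connected linear algebraic group over $\Q_\ell$, it is irreducible as a variety, of some positive dimension $d$. By hypothesis $\Delta$ is a regular function on $\GG$ that is not identically zero, so the zero locus $\{\Delta = 0\} \subset \GG$ is a proper Zariski closed subvariety; in particular $\Delta$ does not vanish on any nonempty Zariski open subset of $\GG$. Since $\GG$ is smooth of dimension $d$, around every $g_0 \in G$ there is an open $\ell$-adic neighborhood $U \subset \GG(\Q_\ell)$ equipped with an analytic homeomorphism $\phi : W \xrightarrow{\sim} U$ from an open subset $W \subset \Q_\ell^d$. The Haar measure on $G$, transported to $W$ by $\phi^{-1}$, is equivalent to the standard Haar measure on $\Q_\ell^d$ (its Radon--Nikodym density is a nowhere-vanishing analytic function on $W$). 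The pullback $\tilde\Delta := \Delta \circ \phi$ is a $\Q_\ell$-analytic function on $W$, and it does not vanish on any nonempty open subset of $W$: otherwise $\Delta$ would vanish on a nonempty $\ell$-adic open subset of $\GG(\Q_\ell)$, which is Zariski dense in the irreducible variety $\GG$, forcing $\Delta \equiv 0$, contrary to hypothesis.

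By compactness of $G$, finitely many such charts $\phi_i : W_i \xrightarrow{\sim} U_i$ cover $G$. It therefore suffices to prove the following measure-zero lemma: if $f : W \to \Q_\ell$ is a $\Q_\ell$-analytic function on an open subset $W \subset \Q_\ell^d$ that does not vanish on any nonempty open subset of $W$, then $\{f = 0\} \cap W$ has Haar measure zero. I would prove this by induction on $d$. The base case $d = 1$ uses the $\ell$-adic analogue of the identity theorem: a nonzero convergent power series has isolated zeros (provable via $\ell$-adic Weierstrass preparation, or directly by factoring $(x - x_0)^k$ out of the local expansion at a zero), so $\{f = 0\}$ is a discrete closed subset of $W$, at most countable, hence of measure zero. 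For $d \geq 2$, locally write $f = \sum_{j \geq 0} P_j(x_1, \dots, x_{d-1})\, x_d^j$; the set on which every $P_j$ vanishes has measure zero in $\Q_\ell^{d-1}$ by the induction hypothesis applied to any nonvanishing $P_j$, and off this set the slice $x_d \mapsto f(x_1, \dots, x_d)$ is a nonzero one-variable analytic function whose zero set has measure zero by the base case. Fubini's theorem then concludes the argument.

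The main obstacle in this plan is the base case of the measure-zero lemma, which rests on the isolation of zeros of a nonzero $\Q_\ell$-analytic function in one variable, essentially an instance of $\ell$-adic Weierstrass preparation. Everything else is a routine combination of smoothness of $\GG$ (for local analytic coordinates), compactness of $G$ (for a finite cover), and Fubini's theorem (for the inductive step).
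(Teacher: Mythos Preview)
Your argument is correct. The key step---that the zero locus of a nonzero $\ell$-adic analytic function on a polydisc has Haar measure zero---is a standard fact, and your sketch via Weierstrass preparation in one variable plus Fubini in higher dimension is the usual way to prove it. One small point worth making explicit: in the inductive step you apply the induction hypothesis to some coefficient $P_j$, which requires $P_j$ not to vanish on any nonempty open subset. This follows from the $\ell$-adic identity principle on a polydisc (a convergent power series vanishing on a sub-polydisc vanishes identically), so it suffices to work locally on small polydiscs covering $W$ and choose any $P_j$ that is not identically zero there.

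The paper takes a genuinely different route. Rather than passing to local charts and invoking Fubini, it appeals to the filtration of the compact $\ell$-adic Lie group $G$ by open normal subgroups $G(n)$ of index $a\ell^{nN}$ (where $N=\dim\GG$), together with the bound from \cite[Sect.~4]{SerreCH} that a proper closed analytic subset of $G$ meets at most $b\ell^{n(N-1)}$ cosets of $G(n)$. Since the Haar measure of each coset is $(a\ell^{nN})^{-1}$, the measure of $G_{\Delta}$ is bounded by $(b/a)\ell^{-n}\to 0$. This counting argument is the one Serre set up precisely for Chebotarev-type applications; it yields explicit quantitative bounds and plugs directly into the density estimates used later in the paper. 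Your approach is more self-contained and closer in spirit to the real-analytic case, but it does not immediately give the effective growth rates that Serre's machinery provides.
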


\begin{proof}[Proof of Lemma \ref{DeltaZero}]
The group $G$ carries the natural structure of open compact $\ell$-adic subgroup of $\GG(\Q_{\ell})$; in addition, if $N$ is
the dimension of $G$ then $N$ coincides with the dimension of $\GG$.  Clearly,  every nonempty open (with respect to $\ell$-adic topology)
 subset of $G$ is dense in $\GG$ with respect to Zariski topology.  This implies that the {\sl interior} of $G_{\Delta}$ with respect to $\ell$-adic topology is {\sl empty}.
Notice that $G_{\Delta}$ is a closed {\sl analytical subspace} of $G$ that is stable under conjugation.  
 It is known  \cite[Sect. 4.2]{SerreCH} that there is a positive integer $a$ such
that for each positive integer $n$ there is an open subgroup $G(n)$ in $G$ with index
$$(G:G(n))=a \ell^{nN}.$$
In addition, there is a positive integer $b$ such that  the image $C_n$ of  $G_{\Delta}$ in the finite group $G/G(n)$ consists of, at most $b\ell^{n(N-1)}$ elements
(\cite[Example at the end of Sect. 4.1 and formula (73) of Sect. 4.2]{SerreCH}).    Since the (normalized) Haar measure of each coset of the subgroup $G(n)$ 
in $G$ is $1/(G:G(n))$, we conclude
that the Haar measure of $G_{\Delta}$ does not exceed 
$$m(n)=\frac{b\ell^{n(N-1)}}{a \ell^{nN}}.$$
Since $m(n)$ tends to $0$ while $n$ tends to $\infty$,  the Haar measure of $G_{\Delta}$ is zero.
\end{proof}

\begin{sect}
\label{general}
Let $\ub$ be an element of $\GG(\Q_{\ell})$ with $\Delta(\ub)\ne 0$, i,e., 
$P_{\ub}(t)$
has {\sl no} multiple roots.  
Then $\ub$ is semisimple and regular in $\GL(V)$  and therefore is a semisimple regular element of $\GG$. Recall that
the subalgebra 
$$\Q_{\ell}[\ub]\subset \End_{\Q_{\ell}}(V)$$
 is a commutative semisimple  $d$-dimensional $\Q_{\ell}$-(sub)algebra  that coincides with
$\ZZ(\ub)$.

Let $\TT$ be the {\sl maximal torus} in $\GG$  that  contains (regular) $\ub$. Since such a $\TT$ is unique \cite[Ch. IV, Sect. 12.2]{Borel}, it is defined over $\Q_{\ell}$ and we have
$$\ub \in \TT(\Q_{\ell}) \subset \GG(\Q_{\ell}) \subset \Aut_{\Q_{\ell}}(V).$$
Let us consider the subset $\TT^{\prime}(\ub)\subset \TT(\Q_{\ell})$ that consists of all 
$$u \in \TT(\Q_{\ell}) \subset \GG(\Q_{\ell}) \subset \Aut_{\Q_{\ell}}(V)$$
such that $\Delta(u)\ne 0$.
 Clearly, $\TT^{\prime}(\ub)$ is open everywhere dense  in $\TT(\Q_{\ell})$ with respect to $\ell$-adic topology,
it contains $\ub$ and all its elements are semisimple regular in $\GG$ and commute with $\ub$. Then for each $u\in \TT^{\prime}(\ub)$
$$\ZZ(u)\subset \End_{\Q_{\ell}}(V)$$
 is also a commutative semisimple $\Q_{\ell}$-(sub)algebra of $\Q_{\ell}$-dimension $d$ that coincides with $\Q_{\ell}[u]$; it also
contains $\ub$ and therefore contains
$\Q_{\ell}[\ub]$, which is also $d$-dimensional. This implies that
$$\Q_{\ell}[u]=\ZZ(u)=\ZZ(\ub)=\Q_{\ell}[\ub]\subset \End_{\Q_{\ell}}(V)$$
and therefore
$$\ZZ(u)_0=\ZZ(\ub)_0\subset \End_{\Z_{\ell}}(T)$$
for all $u \in \TT^{\prime}(\ub)$.

 Let us consider the map of $\ell$-adic manifolds
$$\Psi_{\ub}: \GG(\Q_{\ell})\times \TT^{\prime}(\ub) \to \GG(\Q_{\ell}), \ (g,u) \mapsto gug^{-1}.$$
Clearly,  
$\Delta$  does {\sl not} vanish on  the image of $\Psi$.

It is known (\cite[p.  469, Proof of Th. 2.1]{Harder}, see also \cite[Proof of Prop. 7.3]{LP2}) that the {\sl tangent map} to $\Psi_{\ub}$ is everywhere {\sl surjective} (recall that every $u \in  \TT^{\prime}(\ub)$ is regular in $\GG$). This implies that  $\Psi_{\ub}$ is an {\sl open map}, i.e., the image under $\Psi_{\ub}$ of any open subset of $\GG(\Q_{\ell})\times \TT^{\prime}(\ub) $ is open in  $\GG(\Q_{\ell})$. In particular, if 
$G$ is an open compact subgroup in  $\GG(\Q_{\ell})$ then 
 $\TT^{\prime}(\ub)_G=\TT^{\prime}(\ub)\bigcap G$ is a (nonempty) open subset in  $G$, whose closure   contains $\II$ and therefore
the image  $\Psi_{\ub}(\TT^{\prime}(\ub)_G\times G)$ is an open subset in $G$, whose closure contains $\II$.
Notice that
$$\ZZ(gug^{-1})=g \ZZ(u)g^{-1} \ \forall g \in G.$$
If, if addition, 
$$G\subset \Aut_{\Z_{\ell}}(T)\subset \Aut_{\Q_{\ell}}(V)$$
then
$$\ZZ(gug^{-1})_0=g\ZZ(u)_0 g^{-1} = g \ZZ(\ub)_0 g^{-1} \ \forall g \in G.$$
In particular, the $\Z_{\ell}$-algebras $\ZZ(gug^{-1}))_0$ and $\ZZ(\ub)_0$ are isomorphic.
In addition, if $\ub \in G$ then 
$$\ub \in \Psi_{\ub}(\TT^{\prime}(\ub)_G\times G).$$
\end{sect}

\begin{thm}
\label{open}  Let $G$ be an open compact subgroup in $\GG(\Q_{\ell})$ that lies in
$$\Aut_{\Z_{\ell}}(T)\subset \Aut_{\Q_{\ell}}(V).$$
Let $\ub$ be an element of $\GG(\Q_{\ell})$ such that its characteristic polynomial
$$P_{\ub}(t)=\det(t\II-\ub, V) \in \Q_{\ell}[t]$$
has no multiple roots. Let us consider the set $X(\ub,T,G)$ of all elements $u \in G$ such that the $\Z_{\ell}$-algebra
$\ZZ(u)_0$ is isomorphic to 
$\ZZ(\ub)_0$.
  Then $X(\ub,T,G)$ is a nonempty open subset in $G$ that is stable under conjugation. Its boundary lies in 
$G_{\Delta}$ and contains $\II$. 
\end{thm}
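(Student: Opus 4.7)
The plan is to bootstrap everything from two outputs of Section \ref{general}: (i) for any $v \in \TT'(\ub)$ the equalities $\Q_\ell[v] = \ZZ(v) = \ZZ(\ub)$ force $\ZZ(v)_0 = \ZZ(\ub)_0$ as $\Z_\ell$-subalgebras of $\End_{\Z_\ell}(T)$; and (ii) the conjugation map $\Psi_\ub$ is open, so $\Psi_\ub(G \times \TT'(\ub)_G)$ is a nonempty open subset of $G$ whose closure contains $\II$. Combining these, for every element $gvg^{-1}$ with $g \in G \subset \Aut_{\Z_\ell}(T)$ and $v \in \TT'(\ub)_G$ one has
$$\ZZ(gvg^{-1})_0 = g\,\ZZ(v)_0\,g^{-1} = g\,\ZZ(\ub)_0\,g^{-1},$$
and conjugation by $g$ is a $\Z_\ell$-algebra isomorphism; hence $\Psi_\ub(G \times \TT'(\ub)_G) \subset X(\ub,T,G)$. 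This already gives nonemptiness of $X(\ub,T,G)$ and stability under $G$-conjugation (for $h \in G$ one has $\ZZ(huh^{-1})_0 = h\,\ZZ(u)_0\,h^{-1} \cong \ZZ(u)_0$).

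The key remaining step, and the only slightly subtle point, will be to show that every $u_0 \in X(\ub,T,G)$ is itself regular semisimple, so that exactly the same construction can be re-applied with $u_0$ in place of $\ub$. The crucial calculation is that
$$\ZZ(u_0) = \ZZ(u_0)_0 \otimes_{\Z_\ell} \Q_\ell \cong \ZZ(\ub)_0 \otimes_{\Z_\ell} \Q_\ell = \ZZ(\ub) = \Q_\ell[\ub]$$
is a commutative semisimple $\Q_\ell$-algebra of dimension $d$. Because $u_0$ lies in a commutative semisimple $\Q_\ell$-algebra, it acts semisimply on $V$; and the identity $\dim_{\Q_\ell}\ZZ(u_0) = \sum m_i^2$, where the $m_i$ are the multiplicities of the eigenvalues of $u_0$ over $\overline{\Q}_\ell$, together with $\sum m_i = d$ and $\dim_{\Q_\ell}\ZZ(u_0) = d$, forces every $m_i = 1$. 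So $u_0$ is regular semisimple in $\GG$, lies in a unique $\Q_\ell$-rational maximal torus $\TT_0$ of $\GG$, and satisfies $u_0 \in \TT_0'(u_0)_G$ since $u_0 \in G$ and $\Delta(u_0) \ne 0$. Then $(\II,u_0)$ maps to $u_0$ under $\Psi_{u_0}$, and the argument of the previous paragraph applied with $u_0$ in place of $\ub$ shows $\Psi_{u_0}(G \times \TT_0'(u_0)_G) \subset X(\ub,T,G)$; openness of $\Psi_{u_0}$ then delivers an open neighborhood of $u_0$ inside $X(\ub,T,G)$, proving openness of $X(\ub,T,G)$.

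For the boundary, openness of $X$ gives $\partial X = \overline{X} \setminus X$. If some $u \in \partial X$ had $\Delta(u) \ne 0$, the maximal-torus argument above would produce an open neighborhood of $u$ in $G$ each element $u'$ of which satisfies $\ZZ(u')_0 \cong \ZZ(u)_0$ as $\Z_\ell$-algebras; since $u \in \overline{X}$, this neighborhood meets $X$, forcing $\ZZ(u)_0 \cong \ZZ(\ub)_0$ and so $u \in X$, contradicting $u \in \partial X$. Hence $\partial X \subset G_\Delta$. Finally, Section \ref{general} records that $\II$ lies in the closure of $\Psi_\ub(G \times \TT'(\ub)_G) \subset X$, while (in the relevant case $d \ge 2$) $\Delta(\II) = 0$ combined with the regularity criterion established in the previous paragraph shows $\II \notin X$, so $\II \in \partial X$. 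The main obstacle in the whole argument is the reduction in the second paragraph showing that every element of $X(\ub,T,G)$ is regular semisimple; once that is in hand, everything else follows formally from the open-map output of Section \ref{general} applied pointwise.
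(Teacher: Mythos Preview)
Your proof is correct and follows essentially the same approach as the paper's: nonemptiness and $\II$ in the closure via $\Psi_{\ub}(G\times\TT'(\ub)_G)$ from Section~\ref{general}; openness by showing any $u_0\in X(\ub,T,G)$ is regular semisimple and re-running the construction at $u_0$; and the boundary in $G_{\Delta}$ via the dichotomy that the sets $X(u,T,G)$ for regular semisimple $u$ either coincide or are disjoint (your contradiction argument is just a local rephrasing of the paper's Remark~\ref{disjoint}). You supply more detail than the paper on the key reduction---the $\sum m_i^2=\sum m_i=d$ multiplicity count forcing $u_0$ to have distinct eigenvalues---where the paper simply asserts that a $d$-dimensional commutative semisimple centralizer implies no multiple roots.
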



\begin{rem}
\label{disjoint}
Suppose that $\ub_1$ and $\ub_2$ are elements of $\GG(\Q_{\ell})$ with
$$\Delta(\ub_1)\ne 0, \ \Delta(\ub_2)\ne 0.$$
Let us consider elements
$$u_1 \in X(\ub_1,T,G), \ u_2 \in X(\ub_2,T,G).$$
We have isomorphisms of $\Z_{\ell}$-algebras
$$\ZZ(\ub_1)_0 \cong \ZZ(u_1)_0, \  \ZZ(\ub_2)_0 \cong \ZZ(u_2)_0.$$
This implies that either $\ZZ(\ub_1)_0$ and $\ZZ(\ub_2)_0$ are {\sl isomorphic} and
$$u_1 \in X(\ub_2,T,G), \ u_2 \in X(\ub_1,T,G)$$
or they are {\sl not} isomorphic and
$$u_1\not \in X(\ub_2,T,G), \ u_2 \not\in X(\ub_1,T,G).$$
It follows that the subsets $X(\ub_1,T,G),$ and $X(\ub_2,T,G)$ either {\sl coincide} or do {\sl not} meet each other.
\end{rem}

\begin{proof}
Clearly,   $X(\ub,T,G)$  is stable under conjugation,
$\ub$ lies in $X(\ub,T,G)$ while $\II$ does {\sl not} belong to $X(\ub,T,G)$.
In the notation above,  $\Psi_{\ub}(\TT^{\prime}(\ub)_G\times G)$ is an open subset in $G$, whose closure contains $\II$ (and therefore $\II$ lies on the boundary)
 and such that
for each $u \in  \Psi_{\ub}(\TT^{\prime}_G\times G)$ the $\Z_{\ell}$-algebra 
$\ZZ(u)_0$ is isomorphic to $\ZZ(\ub)_0$.  This implies that $X(\ub,T,G)$ contains open  $\Psi_{\ub}(\TT^{\prime}(\ub)_G\times G)\subset G$. In particular, 
$X(\ub,T,G)$ is nonempty and its closure in $G$ contains $\II$. It remains to prove that $X(\ub,T,G)$ is open.  Let  $u_1$ be an element of
$X(\ub,T,G)$.  Clearly,
$$X(\ub,T,G)=X(u_1,T,G).$$
On the other hand, the  centralizer $\ZZ(u_1)$ of $u_1$ in $\End_{\Q_{\ell}}(V)$ is isomorphic to $\ZZ(\ub)$, i.e., is a semisimple commutative $\Q_{\ell}$-algebra 
of $\Q_{\ell}$-dimension $d$ where $d=\dim_{\Q_{\ell}}(V)$.  This means that the characteristic polynomial of $u_1$ has no multiple roots and therefore
 (replacing $\ub$ by $u_1$) we may define $\TT^{\prime}(u_1)$, $\Psi_{u_1}$ and $\TT^{\prime}(u_1)_G$.   Since $u_1$ is an element of $G$,
 it lies in the open subset
 $\Psi_{u_1}(\TT^{\prime}(\ub)_G\times G)$ of $G$. On the other hand,
$$\Psi_{u_1}(\TT^{\prime}(\ub)_G\times G)\subset X(u_1,T,G)=X(\ub,T,G)$$
which proves the openness of $X(\ub,T,G)$. 

We still have to check that $\Delta$ vanishes identically on the boundary of $X(\ub,T,G)$.
In order to do that, recall (Remark \ref{disjoint}) that if 
$$u \in G, \ \Delta(u) \ne 0$$
then either $X(\ub,T,G)=X(u,T,G)$ or these two open  subsets of $G$ do {\sl not} meet each other. Taking into account that $u\in X(u,T,G)$, we obtain that
$$\{G \setminus G_{\Delta}\}\setminus X(\ub,T,G)$$
 coincides with the union of all (open) $X(u,T,G)$ where $u$ runs through the (same!) set $\{G \setminus G_{\Delta}\} \setminus X(\ub,T,G)$. This implies that 
$G  \setminus\{ G_{\Delta} \bigcup X(\ub,T,G)\}$ is an {\sl open} subset in $G$ that obviously does {\sl not} meet $X(\ub,T,G)$.
This implies that the closure of $X(\ub,T,G)$ lies in
$$X(\ub,T,G) \bigcup G_{\Delta}.$$
Since $X(\ub,T,G)$ is open, its boundary lies in $G_{\Delta}$. On the other hand, we have seen in Section \ref{general} that $\II$ lies in the closure
of $X(\ub,T,G)$ but not in $X(\ub,T,G)$. This implies that $\II$ lies in the boundary of
of $X(\ub,T,G)$ b
\end{proof}

\begin{ex}
\label{GL}
Suppose that $\GG=\GL(V)$.
Let $C$ be a $d$-dimensional semisimple commutative $\Q_{\ell}$-algebra and $R\subset C$ an order in $C$,
i.e., a $\Z_{\ell}$-subalgebra of $C$ (with the same $1$) that is a free $\Z_{\ell}$-module of rank $d$. By Lemma \ref{generator}, there exists
$\ub \in C^{*}$ such that $C=\Q_{\ell}[\ub]$.  
Let us fix an isomorphism of free $\Z_{\ell}$-modules
$R \cong T$
and use it in order to provide $T$ with the structure of a free $R$-module of rank $1$. 
Tensoring by $\Q_{\ell}$, we obtain the natural structure of $R\otimes_{\Z_{\ell}}\Q_{\ell}=C$-module
on $ T\otimes_{\Z_{\ell}}\Q_{\ell}=V$.
This gives us the $\Q_{\ell}$-algebra
embedding $C \hookrightarrow \End_{\Q_{\ell}}(V)$ in such a way that $R\subset C$ lands in $\End_{\Z_{\ell}}(T)\subset \End_{\Q_{\ell}}(V)$.
Further we will identify $C$ and $R$ with their images in  $\End_{\Q_{\ell}}(V)$ and  $\End_{\Z_{\ell}}(T)$ respectively. (In particular, we may view
$\ub$ as an element of $C^{*}\subset \Aut_{\Q_{\ell}}(V)$.)
Since $\ub$ lies in semisimple commutative $C\subset \End_{\Q_{\ell}}(V)$, it is a semisimple linear operator in $V$.
This provides $V$ with the natural structure of a free $C$-module of rank $1$; in particular, the centralizer $\End_C(V)$ of $C$ in 
$\End_{\Q_{\ell}}(V)$ coincides with  $C$.  Similarly, $T$ becomes a free $R$-module of rank $1$ and the centralizer $\End_R(T)$ of $R$ in 
$\End_{\Z_{\ell}}(T)$ coincides with  $R$. It follows that the centralizer of $\ub$ in $\End_{\Q_{\ell}}(V)$ coincides with $C$ and therefore
the centralizer $\ZZ(\ub)_0$ of $\ub$ in $\End_{\Z_{\ell}}(T)\subset \End_{\Q_{\ell}}(V)$ coincides with $R$. In particular, the $\Q_{\ell}$-dimension of the centralizer
of semisimple $\ub$ in  $\End_{\Q_{\ell}}(V)$ coincides with $\dim_{\Q_{\ell}}(V)$ and therefore the characteristic polynomial of $\ub$ has {\sl no} multiple roots.

 Let $\mathbf{X}(R,T,G)$ be the set of all $u \in G$
such that $\ZZ(u)_0$ is isomorphic as $\Z_{\ell}$-algebra to $R$. Then
$$\mathbf{X}(R,T,G)=X(\ub,T,G).$$
It follows from Theorem \ref{open} that $\mathbf{X}(R,T,G)$ is an open nonempty subset of $G$, whose closure contains the identity element and the boundary has
measure zero with respect to the Haar measure on $G$.
\end{ex}

\begin{ex}
\label{symplecticSplit}
Suppose $d=2g$ is even,   $\CC$ is a $g$-dimensional semisimple commutative $\Q_{\ell}$-algebra and $\RR\subset \CC$ is an order in $\CC$.
Let $\mathcal{T}$ be a a free $\RR$-module of rank $1$. Then $\mathcal{V}=\mathcal{T}\otimes_{\Z_{\ell}}\Q_{\ell}$ is a free 
$\RR\otimes_{\Z_{\ell}}\Q_{\ell}=\CC$-module of rank $1$. We may view $\mathcal{T}$ as a rank $g$ $\Z_{\ell}$-lattice (and a $\RR$-submodule) in $\mathcal{V}$.
Let us consider the free $\RR$-module $\mathbf{T}=\mathcal{T}\oplus \Hom_{\Z_{\ell}}(\mathcal{T},\Z_{\ell})$ of rank $2$, which is a  rank $2g$ $\Z_{\ell}$-lattice  in the
$2g$-dimensional vector space $\mathbf{V}=\mathcal{V}\oplus \Hom_{\Q_{\ell}}(\mathcal{V},\Q_{\ell})$. Notice that  $\mathbf{V}$ carries the natural structure of a free $\CC$-module of rank $2$ and we have a natural embedding
$$\CC \oplus \CC \hookrightarrow \End_{\Q_{\ell}}(\mathcal{V}) \oplus  \End_{\Q_{\ell}}(\Hom_{\Q_{\ell}}(\mathcal{V},\Q_{\ell}))\subset 
\End_{\Q_{\ell}}[\mathcal{V}\oplus \Hom_{\Q_{\ell}}(\mathcal{V},\Q_{\ell})]=\End_{\Q_{\ell}}(\mathbf{V})$$
such that each $(u_1,u_2) \in \CC \oplus \CC$ sends $(x, l)\in \mathcal{V}\oplus \Hom_{\Q_{\ell}}(\mathcal{V},\Q_{\ell})$ to $(u_1 x, l u_2)$.
Further we will identify $\CC \oplus \CC$ with its image in $\End_{\Q_{\ell}}(\mathbf{V})$. Under this identification the subring 
$\RR\oplus \RR\subset \CC \oplus \CC$ lands in
$$\End_{\Z_{\ell}}(\mathcal{T}) \oplus  \End_{\Z_{\ell}}(\Hom_{\Z_{\ell}}(\mathcal{T},\Z_{\ell}))\subset 
\End_{\Z_{\ell}}(\mathcal{T}\oplus \Hom_{\Z_{\ell}}(\mathcal{T},\Z_{\ell}))=\End_{\Z_{\ell}}(\mathbf{T}).$$
Clearly, $\CC \oplus \CC$ coincides with its own centralizer in $\End_{\Q_{\ell}}(\mathbf{V})$ and 
 $\RR \oplus \RR$ coincides with its own centralizer in $\End_{\Z_{\ell}}(\mathbf{T})$. Notice that the
$\Q_{\ell}$-dimensions of $\CC \oplus \CC$ and $\mathbf{V})$ do coincide.

There is a perfect alternating $\Z_{\ell}$-bilinear form
$$e: \mathbf{T} \times \mathbf{T} \to \Z_{\ell}, \  (x_1, l_1), (x_2,l_2) \mapsto l_1(x_2)-l_2(x_1)$$
for all
$$x_1,x_2 \in \mathcal{T}, \ l_1,l_2 \in \Hom_{\Z_{\ell}}(\mathcal{T},\Z_{\ell}).$$
This form extends by $\Q_{\ell}$-linearity to the nondegenerate alternating $\Q_{\ell}$-bilinear form
$$\mathbf{V} \times \mathbf{V} \to \Q_{\ell}, \  (x_1, l_1), (x_2,l_2) \mapsto l_1(x_2)-l_2(x_1)$$
$$\forall x_1,x_2 \in \mathcal{V}, \ l_1,l_2 \in \Hom_{\Z_{\ell}}(\mathcal{V},\Q_{\ell}),$$
which we also denote by $e$.

Let $\GG=\GSp(\mathbf{V},e)\subset \GL(\mathbf{V})$ be the (connected) reductive algebraic $\Q_{\ell}$-group of symplectic similitudes of $\mathbf{V}$ attached to $e$. We have
$$\GG(\Q_{\ell})=\GSp(\mathbf{V},e)(\Q_{\ell})=\Gp(\mathbf{V},e).$$
If $u_1 \in \CC^{*}$ and $q \in \Q_{\ell}^{*}$  then the element 
$(u_1, q u_1^{-1}) \in (\CC\oplus \CC)^{*}\subset \Aut_{\Q_{\ell}}(\mathbf{V})$ lies in $\Gp(\mathbf{V},e)$.
When $q=1$ this element lies in $\Sp(\mathbf{V},e)$.

Using Example \ref{GL}, choose  $u_1 \in \CC^{*}\subset \Aut_{\Q_{\ell}}(\mathcal{V})$  such that  the characteristic polynomial 
$P_{u_1}(t)$ has no multiple roots,
$\Q_{\ell}[u_1]=\CC$ and the
centralizer $\ZZ[u_1]_0$ of $u_1$ in $\End_{\Z_{\ell}}(\mathcal{T})\subset _{\Q_{\ell}(\mathcal[V})$ coincides with $\RR$.
We may choose $q$   in such a way that the characteristic polynomial 
$$P_{qu_1^{-1}}(t)= (t/q)^g P_{u_1}(q/t)$$
 of $qu_1^{-1}$ has no common roots with $P_u(t)$. (E.g., pick an integer $N$
such that none of the roots of $P_u(t)$ is of the form $\pm \ell^N$ and put   $q=\ell^{2N}$.) Then the characteristic polynomial of
$$\ub=(u_1, q u_1^{-1}) \in (\CC\oplus \CC)^{*}\subset \Aut_{\Q_{\ell}}(\mathbf{V})$$
coincides with the product $(t/q)^g P_{u_1}(q/t) \cdot P_{u_1}(t)$
and therefore
has no multiple roots. It follows that $\Q_{\ell}[\ub]=\CC\oplus\CC$ and therefore  the centralizer of 
$\ub$ in $\End_{\Q_{\ell}}(\mathbf{V})$ coincides with $\CC\oplus\CC$ and therefore
the centralizer $\ZZ(\ub)_0$ of $\ub$ in $\End_{\Z_{\ell}}(\mathbf{T})\subset \End_{\Q_{\ell}}(\mathbf{V})$ coincides with $\RR\oplus\RR$.

Let $G \subset \Aut_{\Z_{\ell}}(\mathbf{T})$ be an open compact subgroup in $\Gp(\mathbf{V},e)$.
 Let $\mathbf{X}(\RR\oplus\RR,\mathbf{T},G)$ be the set of all $u \in G$
such that $\ZZ(u)_0$ is isomorphic as $\Z_{\ell}$-algebra to $\RR\oplus\RR$. Then
$$\mathbf{X}(\RR\oplus\RR,\mathbf{}T,G)=X(\ub,\mathbf{T},G).$$
It follows from Theorem \ref{open} that $\mathbf{X}(\RR\oplus\RR,\mathbf{T},G)$ is an open nonempty subset of $G$, whose closure contains the identity element and the boundary has
measure zero with respect to the Haar measure on $G$.

\end{ex}

\begin{cor}
\label{manyELL}
Let $\mathcal{G}$ be a compact profinite topological group.
Let $\P$ be a nonempty  finite set of primes.

 Suppose that for each $\ell\in \P$ we are given the following data.
\begin{itemize}
\item
A $\Q_{\ell}$-vector space $V_{\ell}$ of finite positive dimension $d_{\ell}$ provided with a $\Z_{\ell}$-lattice $T_{\ell}\subset V_{\ell}$ of rank $d_{\ell}$.
\item
A connected reductiive linear algebraic subgroup $\GG_{\ell}\subset \GL(V_{\ell})$ of positive dimension.
\item
An element
$$\ub_{\ell} \in\GG_{\ell}(\Q_{\ell}) \subset \Aut_{\Q_{\ell}}(V_{\ell})$$
such that its characteristic polynomial
$$P_{\ub_{\ell}}(t)=\det(t\II-\ub_{\ell},V) \in \Q_{\ell}[t]$$
has no multiple roots. We write $\ZZ(\ub_{\ell})_0$ for the centralizer of $\ub_{\ell}$ in $\End_{\Z_{\ell}}(T_{\ell})\subset \End_{\Q_{\ell}}(V_{\ell})$.
\item
A continuous homomorphism of topological groups
$$\rho_{\ell}: \mathcal{G} \to \Aut_{\Z_{\ell}}(T_{\ell})\subset \Aut_{\Q_{\ell}}(V_{\ell}),$$
whose image 
$$G_{\ell}:= \rho_{\ell}(\mathcal{G}) \subset \Aut_{\Z_{\ell}}(T_{\ell})\subset \Aut_{\Q_{\ell}}(V_{\ell}),$$
  is an open subgroup in $\GG_{\ell}(\Q_{\ell})$.
\end{itemize}
Let us consider the subset $Y_{\ell}  \subset \mathcal{G}$ that consists of all $\sigma \in \mathcal{G}$ such that the centralizer $\ZZ(\rho_{\ell}(\sigma))_0$ of 
$\rho_{\ell}(\sigma)$ in $\End_{\Z_{\ell}}(T_{\ell})\subset \End_{\Q_{\ell}}(V_{\ell})$ is isomorphic (as a $\Z_{\ell}$-algebra) to $\ZZ(\ub_{\ell})_0$.  

Let us consider the product-homomorphism
$$\rho:=\prod_{\ell\in\P}\rho_{\ell}:\mathcal{G} \to \prod_{\ell\in\P} G_{\ell}, \ \sigma\mapsto \{\rho_{\ell}(\sigma)\}_{\ell\in \P}.$$

Then
 the image $\rho(\mathcal{G})$ is an open subgroup of finite index in $\prod_{\ell\in\P} G_{\ell}$ and
 the intersection 
$$Y:=\bigcap_{\ell\in\P}Y_{\ell}\subset \mathcal{G}$$ 
of all $Y_{\ell}$ is an open nonempty  subset in $ \mathcal{G}$ that is stable under conjugation and its closure contains the identity element of $\mathcal{G}$.
\end{cor}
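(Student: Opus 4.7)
The plan is to combine Theorem \ref{open} applied one prime at a time with a Goursat-type openness argument for the product homomorphism $\rho$, and then to read off all the required properties of $Y$. I will first establish that $\rho(\mathcal{G})$ is open of finite index in $\prod_{\ell\in\P} G_\ell$; once this is in hand, the remainder is a routine unpacking of Theorem \ref{open}.

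For openness of $\rho(\mathcal{G})$, note first that $\rho(\mathcal{G})$ is closed in $\prod G_\ell$, being the continuous image of a compact group in a Hausdorff group. Each $G_\ell$ is a compact $\ell$-adic Lie group (open in $\GG_\ell(\Q_\ell)\subset \GL(V_\ell)(\Q_\ell)$), and therefore contains an open normal pro-$\ell$ subgroup $G_\ell^{(0)}$ of finite index, for instance a suitable principal congruence subgroup realized inside some $\GL_n(\Z_\ell)$. Replacing $\mathcal{G}$ by the open finite-index subgroup $\bigcap_\ell \rho_\ell^{-1}(G_\ell^{(0)})$, I may assume each $G_\ell$ is pro-$\ell$. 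In that reduced situation I claim $\rho(\mathcal{G})=\prod G_\ell$: for any open normal subgroup of the form $\prod U_\ell\subset \prod G_\ell$ with $U_\ell\trianglelefteq G_\ell$, each $G_\ell/U_\ell$ is a finite $\ell$-group, so the finite groups $\{G_\ell/U_\ell\}_{\ell\in\P}$ have pairwise coprime orders. The image of $\rho(\mathcal{G})$ in the finite quotient $\prod(G_\ell/U_\ell)$ projects surjectively onto each factor (since each $\rho_\ell$ does), hence its order is divisible by $|G_\ell/U_\ell|$ for every $\ell$, and by coprimality it is divisible by $\prod|G_\ell/U_\ell|$; thus the image fills the whole finite product. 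Taking the intersection over all such $\prod U_\ell$ and using closedness yields $\rho(\mathcal{G})=\prod G_\ell$, and undoing the reduction shows $\rho(\mathcal{G})$ is open of finite index in $\prod G_\ell$.

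With this at hand, I apply Theorem \ref{open} to each datum $(\ub_\ell, T_\ell, G_\ell)$ to produce the open nonempty conjugation-stable subset $X_\ell:=X(\ub_\ell,T_\ell,G_\ell)\subset G_\ell$ whose closure contains $\II$. Each $Y_\ell=\rho_\ell^{-1}(X_\ell)$ is then open in $\mathcal{G}$ by continuity of $\rho_\ell$ and conjugation-stable because $\rho_\ell$ is a homomorphism and $X_\ell$ is conjugation-stable; hence $Y=\bigcap_\ell Y_\ell$ is open and conjugation-stable in $\mathcal{G}$. To show $1\in\overline{Y}$ (and in particular $Y\neq\emptyset$), pick any open neighborhood $U$ of $1\in\mathcal{G}$: the continuous surjection $\mathcal{G}\to\rho(\mathcal{G})$ of compact Hausdorff groups descends to a homeomorphism $\mathcal{G}/\ker\rho\to\rho(\mathcal{G})$, so $\rho$ is an open map onto its image; combined with the openness of $\rho(\mathcal{G})$ in $\prod G_\ell$, this makes $\rho(U)$ an open neighborhood of the identity in $\prod G_\ell$. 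Since $\prod_\ell X_\ell$ has closure containing the identity of $\prod G_\ell$, we have $\rho(U)\cap\prod_\ell X_\ell\neq\emptyset$, and any preimage in $U$ of a point of this intersection lies in $Y$. The main obstacle is the Goursat step above, where the $\ell$-adic Lie structure of the $G_\ell$ and the resulting coprimality between distinct factors is essential; everything else is just continuity plus Theorem \ref{open}.
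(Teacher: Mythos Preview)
Your proof is correct and follows essentially the same approach as the paper: both arguments apply Theorem \ref{open} to each prime separately, pull back via $\rho_\ell$, and use the coprimality of pro-$\ell$ quotients at distinct primes (a Goursat-type argument) to handle the product. The only difference is organizational: the paper passes to an open subgroup $\mathcal{G}_1$ on which each $\rho_\ell$ lands in a pro-$\ell$ group, proves $\rho_1:\mathcal{G}_1\to\prod G_{\ell,1}$ is surjective, and then reads off nonemptiness of $Y\cap\mathcal{G}_1$ directly as a preimage; you instead first deduce openness of $\rho(\mathcal{G})$ in $\prod G_\ell$ and then use that $\rho$ is an open map onto its image to push neighborhoods of the identity forward. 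These are minor rearrangements of the same idea.
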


\begin{proof}
Clearly,  (every $Y_{\ell}$ and therefore)  $Y$ is stable under conjugation.
 It follows from Theorem \ref{open} that every 
$$X(\ub_{\ell},T_{\ell},G_{\ell}) \subset G_{\ell}$$
 is an open nonempty  subset of $G_{\ell}$ and its closure contains the identity element of $G_{\ell}$. Clearly,
$$Y_{\ell}=\rho_{\ell}^{-1}(X(\ub_{\ell},T_{\ell},G_{\ell})) \subset \mathcal{G}.$$
This implies that every
 $Y_{\ell}$  is an open nonempty  subset in $\mathcal{G}$ and its closure contains the identity element of $\mathcal{G}$. This implies
that $Y$ is also open. It remains to check that $Y$ is nonempty and its closure contains the identity element.  In order to do that, notice that each $G_{\ell}$ contains
an open subgroup of finite index that is a pro-$\ell$-group. So, there is an open subgroup $\mathcal{G}_1$ of finite index in $\mathcal{G}$ such that
 $G_{\ell,1}:=\rho_{\ell}(\mathcal{G}_1)$ is a pro-$\ell$-group. Clearly,  $G_{\ell,1}$ is a closed subgroup of finite index in $G_{\ell}$ and therefore is open in $G_{\ell}$ and therefore is open in $\GG_{\ell}(\Q_{\ell})$ as well.

Let us consider the product-homomorphism
$$\rho_1:\mathcal{G}_1 \to \prod_{\ell\in\P} G_{\ell,1}, \ \sigma\mapsto \{\rho_{\ell}(\sigma)\}_{\ell\in \P}.$$
The image $\rho_1(\mathcal{G}_1 )\subset \prod_{\ell\in\P} G_{\ell,1}$ is a compact subgroup that maps surjectively on each factor $G_{\ell,1}$. Since 
the $G_{\ell,1}$'s are pro-$\ell$-groups for pair-wise $\ell$, we have 
$$\rho_1(\mathcal{G}_1)= \prod_{\ell\in\P} G_{\ell,1},$$
i.e., $\rho_1$ is {\sl surjective}.
(Compare with \cite[Proof of Prop. 7.1]{LP2}. Actually, this argument goes back to Serre \cite[Ch. IV,  Sect.  2.2,  Exercise 3c on p. IV-14]{Serre}.)
Since $\rho_1$ is {\sl surjective},
$$Y\bigcap \mathcal{G}_1=\rho_1^{-1}(\prod_{\ell\in\P}X(\ub_{\ell},T_{\ell},G_{\ell,1})) \subset \mathcal{G}_1$$
is nonempty (as the preimage of a nonempty subset) and its closure contains the identity element of $\mathcal{G}_1$.
\end{proof}

\begin{cor}
\label{measureY}
We keep the notation and assumptions of Corollary \ref{manyELL}. Assume additionally that $\mathcal{G}$ is a closed subgroup of $\prod_{\ell\in\P} G_{\ell}$ and $\rho_{\ell}:\mathcal{G}\to G_{\ell}$ coincides with the corresponding projection map (for all $\ell\in P$). Then $\mathcal{G}$ is an open subgroup of finite index in $\prod_{\ell\in\P} G_{\ell}$,  
$$Y=\mathcal{G}\bigcap \prod_{\ell\in\P}X(\ub_{\ell},T_{\ell},G_{\ell})\subset \mathcal{G}$$
is on open nonempty subset of $\mathcal{G}$
  while the boundary of $Y$ in $\mathcal{G}$ contains the identity element of $\mathcal{G}$ and has measure zero with respect to the Haar measure on $\mathcal{G}$.
\end{cor}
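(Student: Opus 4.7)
The plan is to combine Corollary \ref{manyELL} with Theorem \ref{open} and Lemma \ref{DeltaZero}. Under the hypotheses of Corollary \ref{measureY}, each $\rho_{\ell}$ is literally the projection from $\mathcal{G}\subset\prod_{\ell\in\P}G_{\ell}$ to $G_{\ell}$, so the product homomorphism $\rho$ of Corollary \ref{manyELL} is the inclusion $\mathcal{G}\hookrightarrow\prod_{\ell\in\P}G_{\ell}$ and $\rho(\mathcal{G})=\mathcal{G}$. Corollary \ref{manyELL} then immediately gives that $\mathcal{G}$ is open of finite index in the product, and that $Y=\bigcap_{\ell\in\P}Y_{\ell}=\mathcal{G}\cap\prod_{\ell\in\P}X(\ub_{\ell},T_{\ell},G_{\ell})$ is a nonempty open subset of $\mathcal{G}$ whose closure contains the identity. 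The identity cannot itself lie in $Y$, since $\ZZ(\II)_0=\End_{\Z_{\ell}}(T_{\ell})$ has $\Z_{\ell}$-rank $d_{\ell}^{2}$, whereas the separability of $P_{\ub_{\ell}}$ forces $\ZZ(\ub_{\ell})_0$ to have $\Z_{\ell}$-rank $d_{\ell}$; hence the identity lies on $\partial Y$.

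Next I would bound $\partial Y$ set-theoretically. Writing $X_{\ell}:=X(\ub_{\ell},T_{\ell},G_{\ell})$ and using that each $G_{\ell}$ is compact (so that the closure of a finite product is the product of closures), one has
\[
\partial Y=\overline{Y}\setminus Y \;\subset\; \mathcal{G}\cap\Bigl(\prod_{\ell\in\P}\overline{X_{\ell}}\setminus\prod_{\ell\in\P}X_{\ell}\Bigr) \;\subset\; \mathcal{G}\cap\bigcup_{\ell\in\P}\pr_{\ell}^{-1}(\partial X_{\ell}),
\]
the last inclusion because a point of the middle difference must fail to lie in $X_{\ell_0}$ at some coordinate $\ell_0$, and therefore its $\ell_0$-projection lies in $\overline{X_{\ell_0}}\setminus X_{\ell_0}=\partial X_{\ell_0}$.

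To conclude, Theorem \ref{open} gives $\partial X_{\ell}\subset G_{\ell,\Delta}$, and Lemma \ref{DeltaZero} says $G_{\ell,\Delta}$ has Haar measure zero in $G_{\ell}$; by Fubini, each $\pr_{\ell}^{-1}(\partial X_{\ell})$ has product Haar measure zero in $\prod_{\ell'\in\P}G_{\ell'}$. Since $\mathcal{G}$ is open of finite index in this product, its Haar measure is a positive scalar multiple of the restriction of the product measure, so null sets restrict to null sets. A finite union of null sets is null, hence $\partial Y$ has $\mathcal{G}$-Haar measure zero, as required.

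The only step that requires a moment of care is this final comparison of Haar measures, because the statement places the Haar measure on $\mathcal{G}$ rather than on the ambient product. This is however routine: an open subgroup of finite index in a compact group inherits its Haar measure by restriction (up to a positive normalization constant), so passing from $\prod_{\ell\in\P}G_{\ell}$ down to $\mathcal{G}$ preserves the null ideal and no genuine obstacle arises.
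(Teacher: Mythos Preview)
Your proof is correct and follows essentially the same route as the paper's. Both arguments identify $Y=\mathcal{G}\cap\prod_{\ell}X_{\ell}$ via Corollary \ref{manyELL}, bound $\partial Y$ inside the union of the ``slabs'' $\pr_{\ell}^{-1}\bigl((G_{\ell})_{\Delta}\bigr)=(G_{\ell})_{\Delta}\times\prod_{\ell'\ne\ell}G_{\ell'}$ using Theorem \ref{open}, invoke Lemma \ref{DeltaZero} and Fubini to see each slab is null in the product, and then pass from the product Haar measure to that of the open finite-index subgroup $\mathcal{G}$. Your write-up is in fact slightly more explicit than the paper's in two places: you give the rank comparison $d_{\ell}$ versus $d_{\ell}^{2}$ to justify $\II\notin Y$ (the paper simply cites this as ``clear'' inside the proof of Theorem \ref{open}), and you spell out why restricting to an open finite-index subgroup preserves null sets.
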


\begin{proof}
Clearly, $\mathcal{G}$ is compact.
It follows from Corollary \ref{manyELL} that $\mathcal{G}$ is an open subgroup of finite index in $\prod_{\ell\in\P} G_{\ell}$. By the definition of $Y$,
$$Y=\mathcal{G}\bigcap \prod_{\ell\in\P}X(\ub_{\ell},T_{\ell},G_{\ell})\subset \prod_{\ell\in\P} G_{\ell}.$$ It follows that the closure $\bar{Y}$ of $Y$ lies in
$$\prod_{\ell\in\P}\left[X(\ub_{\ell},T_{\ell},G_{\ell})\bigsqcup (G_{\ell})_{\Delta}\right]
\subset \prod_{\ell\in\P} G_{\ell}.$$
Recall (Corollary \ref{manyELL}) that $Y$ is open in $\mathcal{G}$.
This implies that the boundary $\partial Y$ of $Y$ lies in the (finite)  union $Z$ of products
$$Z_p:=(G_{p})_{\Delta}\times \prod_{\ell\in\P, \ell \ne p}G_{\ell}$$
for all $p \in P$. By Lemma \ref{DeltaZero}, $(G_{p})_{\Delta}$ has measure zero with respect to the Haar measure on $G_{p}$. This implies that each product-set $Z_p$ has measure zero with respect to the Haar measure on $\prod_{\ell\in\P} G_{\ell}$. It follows that their union $Z$ and therefore  its subset $\partial Y$ have measure zero with respect to the Haar measure on $\prod_{\ell\in\P} G_{\ell}$. Since $\partial Y$  lies in $\mathcal{G}$, which is an open subgroup of finite index in $\prod_{\ell\in\P} G_{\ell}$, the boundary $\partial Y$   has measure zero with  respect to the Haar measure on
$\mathcal{G}$ as well.
\end{proof}

\begin{rem}
\label{positive}
Since $Y$ is open nonempty in  $\mathcal{G}$, 
 its measure (with respect to the Haar measure) is positive.
\end{rem}


\section{Frobenius elements}
\label{Felement}

Let $\P$ be a finite nonempty set of primes.
 Let $K$ be a number field and  $L \subset \bar{K}$  a Galois extension of $K$  that is unramified outside a finite set of places of $K$.
Let $\mathcal{G}:=\Gal(L/K)$ be the Galois group of $L/K$.  

 Let $v$ be a  nonarchimedean place of $K$. Let us choose an extension $\bar{v}$ of $v$ to $\bar{K}$. Let $D(\bar{v})\subset \Gal(K)$ be the decomposition group of  
$\bar{v}$  and
$I(\bar{v})\subset D(\bar{v})$ the (normal)  inertia (sub)group of $\bar{v}$. It is known that the quotient $D(\bar{v})/I(\bar{v})$ is canonically isomorphic to the absolute Galois group $\Gal(k(v))$ of the finite {\sl residue field} $k(v)$ at $v$. In particular, this quotient has a canonical generator $\phi_{\bar{v}}$ that corresponds to the {\sl Frobenius automorphism} in  $\Gal(k(v))$.

There is the natural continuous surjective homomorphism (restriction map)
$$\res_L: \Gal(K) \twoheadrightarrow \Gal(L/K)$$
that kills $I(\bar{v})$ if and only if $v$ is unramified in $L$. If this is the case then the restriction then  $\res_L$ induces a continuous homomorphism
$D(\bar{v})/I(\bar{v}) \to \Gal(L/K)$ and we call the image of  $\phi_{\bar{v}}$ the Frobenius element at $\bar{v}$ in $\Gal(L/K)$ and denote it
$$\Frob_{\bar{v},L} \in \Gal(L/K).$$
All the $\Frob_{\bar{v},L}$'s (for a given $v$) constitute a {\sl conjugacy class} in $\Gal(L/K)$.

If $L^{\prime}/K$ is a Galois subextension of $L/K$ then the corresponding Frobenius element
$$\Frob_{\bar{v},L^{\prime}} \in \Gal(L^{\prime}/K)$$
coincides with the image of $\Frob_{\bar{v},L}$ under the natural surjective homomorphism (restriction map)
$$\Gal(L/K) \twoheadrightarrow \Gal(L^{\prime}/K).$$

We will need the following  variant of the Chebotarev's density theorem 
that is due to Serre \cite[Ch. I, Sect. 2.2, Cor. 2]{Serre}.

\begin{lem}
\label{densityS}
Let 
$\mathcal{X}$  be a subset of the Galois group $\mathcal{G}=\Gal(L/K)$ that is stable under conjugation. Assume that the boundary of $\mathcal{X}$ has measure $0$
 with respect to the Haar measure on  $\mathcal{G}$.  Then the set
of nonarchimedean places $v$ of $K$ such that corresponding {\sl Frobenius elements} $\Frob_{\bar{v}}$ lie in $\mathcal{X}$ has {\sl positive density}.
\end{lem}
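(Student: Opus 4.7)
The plan is to reduce to the classical Chebotarev density theorem for finite Galois extensions via an approximation argument on the profinite group $\mathcal{G} = \Gal(L/K)$, using the boundary-measure-zero hypothesis to control the error.

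First, I would write $\mathcal{G}$ as the inverse limit of its finite quotients $\Gal(L_\alpha/K)$ over the finite Galois subextensions $K \subset L_\alpha \subset L$, and denote by $\pi_\alpha : \mathcal{G} \twoheadrightarrow \Gal(L_\alpha/K)$ the corresponding restriction map. The conjugation-invariant clopen subsets of $\mathcal{G}$ are precisely the preimages under some $\pi_\alpha$ of conjugation-invariant subsets of the finite group $\Gal(L_\alpha/K)$, and by compactness of $\mathcal{G}$ together with continuity of the conjugation action they form a base for the topology of conjugation-invariant open sets. The hypothesis $\mu(\partial\mathcal{X}) = 0$ gives $\mu(\mathrm{int}(\mathcal{X})) = \mu(\overline{\mathcal{X}}) = \mu(\mathcal{X})$, and combined with regularity of the Haar measure $\mu$ on the compact group $\mathcal{G}$ this allows one to produce, for each $\epsilon > 0$, a single finite Galois subextension $L_\alpha/K$ and conjugation-invariant subsets $S^- \subset S^+ \subset \Gal(L_\alpha/K)$ such that $\pi_\alpha^{-1}(S^-) \subset \mathcal{X} \subset \pi_\alpha^{-1}(S^+)$ and $\mu(\pi_\alpha^{-1}(S^+)) - \mu(\pi_\alpha^{-1}(S^-)) < \epsilon$.

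Next, I would apply the classical Chebotarev density theorem to the finite extension $L_\alpha/K$: for any conjugation-invariant $S \subset \Gal(L_\alpha/K)$, the density of unramified nonarchimedean places $v$ with $\Frob_{\bar{v},L_\alpha} \in S$ exists and equals $|S|/[L_\alpha:K] = \mu(\pi_\alpha^{-1}(S))$. Because $\Frob_{\bar{v},L_\alpha}$ is the image of $\Frob_{\bar{v},L}$ under $\pi_\alpha$, the set of places with $\Frob_{\bar{v},L} \in \mathcal{X}$ is sandwiched between those with Frobenius in the two cylindrical approximations. Taking lower and upper densities and letting $\epsilon \to 0$ shows that the density exists and equals $\mu(\mathcal{X})$. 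Under the tacit assumption (satisfied in every application in this paper, where $\mathcal{X}$ has nonempty open interior) that $\mu(\mathcal{X}) > 0$, the density is then strictly positive.

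The main obstacle is the second step: producing conjugation-invariant cylindrical approximations $\pi_\alpha^{-1}(S^\pm)$ of $\mathcal{X}$ from the single hypothesis that $\mu(\partial\mathcal{X}) = 0$. This is a regularity-type claim that must respect both the profinite topology and the conjugation action of $\mathcal{G}$ at once: one covers the compact set $\partial\mathcal{X}$ by conjugation-invariant clopen neighborhoods of total Haar measure at most $\epsilon$, extracts a finite subcover, and refines to a common finite quotient $\Gal(L_\alpha/K)$. Everything else is then a direct application of finite Chebotarev followed by a standard limit argument in $\epsilon$.
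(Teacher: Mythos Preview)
The paper does not actually prove this lemma: it is stated as a known variant of Chebotarev's density theorem and attributed to Serre \cite[Ch.~I, Sect.~2.2, Cor.~2]{Serre}, with no proof given in the paper itself. Your sketch is the standard argument (and is essentially the one Serre gives in the cited reference): approximate the conjugation-invariant set $\mathcal{X}$ from inside and outside by conjugation-invariant clopen cylinder sets coming from a finite quotient $\Gal(L_\alpha/K)$, apply the classical Chebotarev theorem to the finite extension $L_\alpha/K$, and let the approximation error go to zero using $\mu(\partial\mathcal{X})=0$. So your approach matches the intended one.

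You are also right to flag the wording of the conclusion. What the argument actually yields is that the set of places has a density, equal to $\mu(\mathcal{X})$; the word ``positive'' in the statement is only justified under the additional hypothesis $\mu(\mathcal{X})>0$, which the paper tacitly uses (in every application here $\mathcal{X}$ is open and nonempty, so this is automatic). Your proof plan handles this correctly.
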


 We will
apply Lemma \ref{densityS} in the following situation. 

The field $L$ is a compositum of infinite Galois extensions $K(\ell/K)$ for all $\ell\in \P$. The inclusions $K \subset K(\ell)\subset L$ induces a continuous surjective homomorphism
$$\rho_{\ell}:\mathcal{G}=\Gal(L/K)\twoheadrightarrow \Gal(K(\ell)/K),$$
which we denote by 
$$\rho_{\ell}: \mathcal{G}\twoheadrightarrow \Gal(K(\ell)/K).$$
The product-homomorphism
$$\rho: \mathcal{G}\to \prod_{\ell\in \P} \Gal(K(\ell)/K),  \ \sigma\mapsto \{\rho_{\ell}(\sigma)\}_{\ell\in \P}$$
is an embedding, whose (homeomorphic)  image is 
 a certain  closed subgroup of the product $\prod_{\ell\in \P} \Gal(K(\ell)/K)$ that maps surjectively on each factor.
Further we will identify $\mathcal{G}$ with this closed subgroup in $\prod_{\ell\in \P} \Gal(K(\ell)/K)$.

\begin{lem}
\label{density}
 Suppose that for each $\ell\in \P$ we are given the following data.
\begin{itemize}
\item
A $\Q_{\ell}$-vector space $V_{\ell}$ of finite positive dimension $d_{\ell}$ provided with a $\Z_{\ell}$-lattice $T_{\ell}\subset V_{\ell}$ of rank $d_{\ell}$.
\item
A connected reductiive linear algebraic subgroup $\GG_{\ell}\subset \GL(V_{\ell})$ of positive dimension.
\item
An element
$$\ub_{\ell} \in\GG_{\ell}(\Q_{\ell}) \subset \Aut_{\Q_{\ell}}(V_{\ell})$$
such that its characteristic polynomial
$P_{\ub_{\ell}}(t)=\det(t\II-\ub_{\ell},V) \in \Q_{\ell}[t]$
has no multiple roots. 
\item
A compact subgroup
$$G_{\ell} \subset \Aut_{\Z_{\ell}}(T_{\ell})\subset \Aut_{\Q_{\ell}}(V_{\ell})$$
that is   an open subgroup in $\GG_{\ell}(\Q_{\ell})$.
\item
An isomorphism of compact groups
$$\Gal(K(\ell)/K) \cong G_{\ell}.$$
Further we will identify these two groups via this isomorphism and $\mathcal{G}$ with the certain closed subgroup of
 $\prod_{\ell\in \P} G_{\ell}$ that maps surjectively on each factor. We keep the notation $\rho_{\ell}$ for the projection map
$$\mathcal{G}\twoheadrightarrow G_{\ell}.$$
We have for each $\ell \in \P$ and $\sigma \in \mathcal{G}$
$$\rho_{\ell}(\sigma)\in G_{\ell}  \subset \Aut_{\Z_{\ell}}(T_{\ell})\subset \Aut_{\Q_{\ell}}(V_{\ell}).$$
\item
For each $\ell \in \P$ let us consider the subset $Y_{\ell}  \subset \mathcal{G}$ that consists of all $\sigma \in \mathcal{G}$ such that the centralizer $\ZZ(\rho_{\ell}(\sigma))_0$ of 
$\rho_{\ell}(\sigma)$ in $\End_{\Z_{\ell}}(T_{\ell})\subset \End_{\Q_{\ell}}(V_{\ell})$ is isomorphic (as a $\Z_{\ell}$-algebra) to $\ZZ(\ub_{\ell})_0$.  
\end{itemize}
Let us consider the intersection 
$$Y=\bigcap_{\ell\in P}Y_{\ell}\subset \mathcal{G} \subset \prod_{\ell\in P} G_{\ell}.$$
Then  the set
of nonarchimedean places $v$ of $K$ such that corresponding {\sl Frobenius elements} $\Frob_{\bar{v}}$ lie in $Y$ has density $>0$.
\end{lem}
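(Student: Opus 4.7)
The proof will be essentially a direct application of the preparatory results already established in Sections~\ref{linearA} and the Chebotarev statement Lemma~\ref{densityS}. My plan is to check that the set $Y$ is a ``Serre-admissible'' subset of $\mathcal{G}=\Gal(L/K)$, i.e., stable under conjugation with boundary of Haar measure zero, and then invoke Lemma~\ref{densityS}.

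First I would note that the hypotheses of Corollary~\ref{measureY} are precisely what is assumed here: $\mathcal{G}$ is identified (via the product of the restriction maps) with a closed subgroup of $\prod_{\ell\in\P} G_{\ell}$ that maps surjectively onto each factor $G_{\ell}$, and each $\rho_{\ell}$ coincides with the corresponding projection. Each $G_{\ell}$ is by assumption an open subgroup of $\GG_{\ell}(\Q_{\ell})$, each $\ub_{\ell}\in \GG_{\ell}(\Q_{\ell})$ is semisimple regular (its characteristic polynomial has no multiple roots), and $\ZZ(\ub_{\ell})_0$ is the centralizer in $\End_{\Z_{\ell}}(T_{\ell})$. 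Thus Corollary~\ref{measureY} applies verbatim and yields that
$$Y = \mathcal{G}\cap\prod_{\ell\in\P}X(\ub_{\ell},T_{\ell},G_{\ell})$$
is an open nonempty subset of $\mathcal{G}$ whose boundary in $\mathcal{G}$ has Haar measure zero.

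Next I would verify stability of $Y$ under conjugation. This is immediate from the definitions: if $\sigma\in Y_{\ell}$ and $\tau\in\mathcal{G}$, then $\rho_{\ell}(\tau\sigma\tau^{-1})=\rho_{\ell}(\tau)\rho_{\ell}(\sigma)\rho_{\ell}(\tau)^{-1}$, and since conjugation by $\rho_{\ell}(\tau)\in \Aut_{\Z_{\ell}}(T_{\ell})$ carries $\End_{\Z_{\ell}}(T_{\ell})$ to itself as $\Z_{\ell}$-algebras, the centralizer $\ZZ(\rho_{\ell}(\tau\sigma\tau^{-1}))_0$ is conjugate, hence isomorphic as a $\Z_{\ell}$-algebra, to $\ZZ(\rho_{\ell}(\sigma))_0$. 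Therefore each $Y_{\ell}$ is stable under conjugation, and so is their intersection $Y$.

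Finally I would apply Lemma~\ref{densityS} (Serre's refinement of the Chebotarev density theorem) to $\mathcal{X}=Y\subset\mathcal{G}$: $Y$ is conjugation-stable, and its boundary has Haar measure zero, so the set of nonarchimedean places $v$ of $K$ whose Frobenius elements $\Frob_{\bar{v}}\in \mathcal{G}$ lie in $Y$ has positive density. Since $Y$ is open and nonempty, its Haar measure is in fact strictly positive (cf.\ Remark~\ref{positive}), which is consistent with the positivity of density. This completes the proof.

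There is essentially no hard step remaining: the analytic/Lie-theoretic content (openness of $X(\ub_{\ell},T_{\ell},G_{\ell})$, containment of the boundary in the measure-zero set $(G_{\ell})_{\Delta}$, and the surjectivity argument via pro-$\ell$ groups that secures nonemptiness of $Y$) was all absorbed into Corollaries~\ref{manyELL} and~\ref{measureY}. The only potential subtlety one might wish to double-check is that Lemma~\ref{densityS} applies to the present infinite Galois extension $L/K$, which is fine since $L$ is by construction unramified outside a finite set of places, so Frobenius elements are well-defined up to conjugacy for almost all $v$.
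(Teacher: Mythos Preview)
Your proposal is correct and follows essentially the same approach as the paper: verify via Corollaries~\ref{manyELL} and~\ref{measureY} (and Remark~\ref{positive}) that $Y$ is conjugation-stable, open nonempty, with boundary of Haar measure zero, and then apply Lemma~\ref{densityS}. The paper's own proof is just a two-line citation of these same preparatory results, so your write-up is simply a slightly more explicit version of it.
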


\begin{proof}
Let us put $\mathcal{X}:=Y\subset \mathcal{G}$. We know that $Y$ is stable under conjugation,  has positive measure,  and its boundary has measure $0$
with respect to the Haar measure on $\mathcal{G}$ (Remark \ref{positive} and Corollary \ref{manyELL}). Now the result follows from Lemma \ref{densityS}.
\end{proof}

\begin{rem}
\label{indexD}
Suppose that for each $\ell \in \P$ we are given an open {\sl normal} subgroup $G_{\ell}^{\prime}$ in $G_{\ell}$
of finite index. Let us put
$$\mathcal{G}^{\prime}=\mathcal{G}\bigcap \prod_{\ell\in P} G_{\ell}^{\prime}\subset  \mathcal{G}, 
\ Y^{\prime}=Y\bigcap \mathcal{G}^{\prime}\subset \mathcal{G}^{\prime}.$$
Then $\mathcal{G}^{\prime}$ is  open subgroup of finite index in $\mathcal{G}$ and therefore is closed in $\mathcal{G}$.
We know that $Y$ is open in $\mathcal{G}$ and its boundary contains the identity element. This implies that $Y^{\prime}$ is an open nonempty subset
of $\mathcal{G}$; in particular, it has positive measure with respect to the Haar measure on $\mathcal{G}$.
  Since each $G_{\ell}^{\prime}$ is normal in $G_{\ell}$, the subgroup $\prod_{\ell\in P} G_{\ell}^{\prime}$ is normal
in $\prod_{\ell\in P} G_{\ell}$ and therefore $\mathcal{G}^{\prime}$ is normal in $\mathcal{G}$, which implies that $Y^{\prime}$ is a subset of 
$\mathcal{G}$ that is stable under conjugation.
  On the other hand, the boundary of $Y^{\prime}$ lies in the boundary of $Y$ and therefore also has measure zero with respect to the Haar measure
on $\mathcal{G}$.  Now  Lemma \ref{densityS} implies that the set
of nonarchimedean places $v$ of $K$ such that corresponding {\sl Frobenius elements} $\Frob_{\bar{v}}$ lie in $Y^{\prime}$ has density $>0$.

\end{rem}

\begin{sect}
\label{remind}
Let $\P$ be a nonempty finite set of primes, $A$ an abelian variety of positive dimension $g$ over a number field $K$.
Let us put 
$$ d=2g,  V_{\ell}=V_{\ell}(A), T_{\ell}=T_{\ell}(A), \rho_{\ell}=\rho_{\ell,A},$$
$$\mathcal{G}=\Gal(K), \ G_{\ell}= \rho_{\ell,A}(\Gal(K))=G_{\ell,A}.$$
We define $K(\ell)\subset \bar{K}$ as the field $\bigcup_{i=1}^{\infty}K(A_{\ell^i})$
of definition of all $\ell$-power torsion points on $A$.
It follows from the definition of Tate modules that $K(\ell)$ coincides with the subfield of $\ker(\rho_{\ell,A})$-invariants
in $\bar{K}$ and $\Gal(K(\ell)/K)=G_{\ell,A}$.
Let $v$ be a nonarchimedean place of $K$ and $\bar{v}$ an extension of $v$ to $\bar{K}$. Assume that $A$ has good reduction at $v$ and the residual
chacacteristic od $v$ is different from $\ell$.  Then
$$\Frob_{\bar{v},K(\ell)}=\Frob_{\bar{v},A,\ell} \in G_{\ell,A}=\Gal(K(\ell)/K) $$
(\cite[Sect. 2]{SerreTate}, \cite[Ch. I]{Serre}).   On the other hand,  recall (Sect. \ref{numberField}) that there is
an isomorphism of $\Z_{\ell}$-algebras
 $$\ZZ(\Frob_{\bar{v},A,\ell})_0 \cong \End(A(v))\otimes \Z_{\ell}. \eqno{(***)}$$
\end{sect}

\begin{thm}
\label{GSP}
Let $g$ be a positive integer.
Let $\P$ be a nonempty finite set of primes. Suppose that for every $\ell \in \P$ we are given the following data.

\begin{itemize}
\item
A $2g$-dimensional $\Q_{\ell}$-vector space $V_{\ell}$ provided with alternating nondegenerate $\Z_{\ell}$-bilinear form
$$e_{\ell}: V_{\ell} \times V_{\ell} \to \Q_{\ell}.$$
We write $\Gp(V_{\ell},e_{\ell})\subset \Aut_{\Q_{\ell}}(V_{\ell})$ for the corresponding group of symplectic similitudes.
\item
An element
$$\ub_{\ell} \in \Gp(V_{\ell},e_{\ell})\subset \Aut_{\Q_{\ell}}(V_{\ell})$$
such that the characteristic polynomial
$$P_{\ub_{\ell}}(t)=\det(t\II-\ub_{\ell}, V_{\ell}) \in \Q_{\ell}[t]$$
has no multiple roots.  Let  $\ZZ(\ub_{\ell})$ be the centralizer of $\ub_{\ell}$ in $\End_{\Q_{\ell}}(V_{\ell})$, 
which  is a commutative semisimple $\Q_{\ell}$-algebra
of dimension $2g$.
\item
A $\Z_{\ell}$-lattice $T_{\ell}$ of rank $2g$ in $V_{\ell}$ such that the restriction of
$e_{\ell}$ to $T_{\ell} \times T_{\ell}$ takes on values in $\Z_{\ell}$ and  the corresponding alternating 
$\Z_{\ell}$-bilinear form
$$T_{\ell} \times T_{\ell} \to \Z_{\ell}, \ x,y \mapsto e_{\ell}(x,y)$$
is perfect.
Let  $\ZZ(\ub_{\ell})_0$ for the centralizer of $\ub_{\ell}$ in
$$\End_{\Z_{\ell}}(T_{\ell})\subset \End_{\Q_{\ell}}(V_{\ell}),$$
which is an order in $\ZZ(\ub_{\ell})$ and coincides with the intersection $\ZZ(\ub_{\ell})\bigcap \End_{\Z_{\ell}}(T_{\ell})$.
\end{itemize}

Let $A$ be a $g$-dimensional abelian variety over a number field $K$ that admits a polarization $\lambda$ such that its degree $\deg(\lambda)$ is not
divisible by $\ell$ for all $\ell \in \P$.  Suppose that $\GG_{\ell,A}=\GSp(V_{\ell}(A),e_{\lambda,\ell}$ for all primes $\ell$.

Let $\Sigma$ be the set of nonarchimedean places of $K$ such that $A$ has good reduction at $v$, the residual characteristic of $v$ does not belong to $\P$ and the $\Z_{\ell}$-algebras $\End(A(v))\otimes \Z_{\ell}$ and  $\ZZ(\ub_{\ell})_0$  are isomorphic  for all $\ell\in \P$.

Then $\Sigma$ has positive density.
\end{thm}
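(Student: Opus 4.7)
The plan is to transport the abstract data $(V_\ell, e_\ell, T_\ell, \ub_\ell)$ to the concrete data coming from $A$, verify the hypotheses of Lemma \ref{density}, and conclude via the identification $\ZZ(\Frob_{\bar v, A,\ell})_0 \cong \End(A(v)) \otimes \Z_\ell$ recorded in (***) of Section \ref{remind}.

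First, fix $\ell \in \P$. Because $\deg(\lambda)$ is prime to $\ell$, the form $e_{\lambda,\ell}$ is unimodular on $T_\ell(A)$, so both $(T_\ell, e_\ell)$ and $(T_\ell(A), e_{\lambda,\ell})$ are free $\Z_\ell$-modules of rank $2g$ equipped with perfect alternating $\Z_\ell$-bilinear forms. By the standard classification of such forms over $\Z_\ell$ (existence of a symplectic basis), there is an isomorphism of $\Z_\ell$-modules $\phi_\ell \colon T_\ell \xrightarrow{\cong} T_\ell(A)$ intertwining $e_\ell$ and $e_{\lambda,\ell}$. Extending by $\Q_\ell$-linearity, $\phi_\ell$ transports $\ub_\ell$ to an element $\tilde\ub_\ell \in \Gp(V_\ell(A), e_{\lambda,\ell}) = \GSp(V_\ell(A), e_{\lambda,\ell})(\Q_\ell)$ whose characteristic polynomial $P_{\tilde\ub_\ell}(t) = P_{\ub_\ell}(t)$ has no multiple roots, and whose centralizer $\ZZ(\tilde\ub_\ell)_0$ in $\End_{\Z_\ell}(T_\ell(A))$ is $\Z_\ell$-algebra isomorphic to $\ZZ(\ub_\ell)_0$.

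Next, set $K(\ell) = \bigcup_{i \ge 1} K(A_{\ell^i})$, so that $\Gal(K(\ell)/K) = G_{\ell, A}$ (Section \ref{remind}). By hypothesis $\GG_{\ell,A} = \GSp(V_\ell(A), e_{\lambda,\ell})$, and by Bogomolov's theorem quoted in the introduction, $G_{\ell,A}$ is an open (compact) subgroup of $\GG_{\ell,A}(\Q_\ell) = \Gp(V_\ell(A), e_{\lambda,\ell})$, which lies in $\Aut_{\Z_\ell}(T_\ell(A))$. Thus, taking $L$ to be the compositum of the $K(\ell)$ for $\ell \in \P$ and $\mathcal G := \Gal(L/K)$, the data
\[
\bigl(V_\ell(A),\, T_\ell(A),\, \GG_{\ell,A},\, \tilde\ub_\ell,\, G_{\ell,A} = \Gal(K(\ell)/K)\bigr), \qquad \ell \in \P,
\]
satisfy the assumptions of Lemma \ref{density}. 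Note that $L/K$ is unramified outside the finite set of places of bad reduction of $A$ together with the places dividing some $\ell \in \P$, so Lemma \ref{densityS} (used through Lemma \ref{density}) applies.

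Applying Lemma \ref{density}, the set $\Sigma'$ of nonarchimedean places $v$ of $K$ (unramified in $L$) for which $\Frob_{\bar v, L} \in Y := \bigcap_{\ell \in \P} Y_\ell$ has positive density. For any such $v$, the image of $\Frob_{\bar v, L}$ in $\Gal(K(\ell)/K) = G_{\ell,A}$ is $\Frob_{\bar v, A, \ell}$ (Section \ref{remind}), and by definition of $Y_\ell$ its centralizer $\ZZ(\Frob_{\bar v, A, \ell})_0$ is isomorphic as a $\Z_\ell$-algebra to $\ZZ(\tilde\ub_\ell)_0 \cong \ZZ(\ub_\ell)_0$. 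Combining with the isomorphism (***), $\End(A(v)) \otimes \Z_\ell \cong \ZZ(\ub_\ell)_0$ for every $\ell \in \P$. Removing the finite set of places at which $A$ has bad reduction or whose residual characteristic lies in $\P$ has density zero, so the set $\Sigma$ of the theorem still has positive density.

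The only nontrivial input is that Bogomolov's theorem guarantees $G_{\ell,A}$ is open in $\GG_{\ell,A}(\Q_\ell)$ (needed so that Lemma \ref{density} applies), and that the abstract symplectic lattice $(T_\ell, e_\ell)$ can be matched with $(T_\ell(A), e_{\lambda,\ell})$; the latter is immediate from the standard symplectic normal form over $\Z_\ell$ once one knows the polarization has degree prime to $\ell$. The remaining work is bookkeeping that reduces the statement to the general density result of Lemma \ref{density}.
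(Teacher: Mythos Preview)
Your proof is correct and follows essentially the same approach as the paper: transport the abstract symplectic data to $(T_\ell(A), e_{\lambda,\ell})$ via a symplectic isomorphism (which exists because both forms are perfect of the same rank), then invoke Lemma \ref{density} together with the identification (***) from Section \ref{remind}. The paper's proof is terser---it does not spell out the existence of the symplectic isomorphism or the role of Bogomolov's theorem---but the logical structure is identical.
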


\begin{rem}
If $\GG_{\ell,A}=\GSp(V_{\ell}(A),e_{\lambda,\ell})$ for one prime $\ell$ then it is true for all primes \cite{ZarhinMMJ}.
Such $A$ are sometimes called abelian varieties of GSp type. If $A$ is an abelian variety of GSp type then the set of nonarchimedean places $v$ of $K$ such
that $\End^0(A(v))$ is  a degree $2g$ CM field has density 1 \cite{Zyvina}.

\end{rem}
\begin{proof}
For each $\ell \in \P$ let us fix a symplectic isomorphism
$$\phi_{\ell}:   (T_{\ell}(A), e_{\lambda,\ell}) \cong (T_{\ell}, e_{\ell}).$$
Extending $\phi_{\ell}$ by $\Q_{\ell}$-linearity, we obtain the symplectic isomorphism
$$ (V_{\ell}(A), e_{\lambda,\ell}) \cong(V_{\ell}, e_{\ell}),$$
which we continue to denote by $\phi_{\ell}$.  Clearly,
$$\Gp(V_{\ell}(A), e_{{\lambda},\ell})= \phi_{\ell}^{-1}\Gp(V_{\ell}), e_{\ell})\phi_{\ell}.$$
Let us put
$$\ub_{\ell}^{\prime}= \phi_{\ell}^{-1}\ub_{\ell} \phi\in  \phi_{\ell}^{-1}\Gp(V_{\ell}), e_{\ell})\phi_{\ell}=Gp(V_{\ell}(A), e_{{\lambda},\ell})\subset \Aut_{\Q_{\ell}}(V_{\ell}(A).$$
Clearly, the characteristic polynomial of $\ub_{\ell}^{\prime}$ has no multiple roots  (since it coincides with characteristic polynomial of $\ub_{\ell}$) and the centralizer
$\ZZ(\ub_{\ell}^{\prime})_0$  is isomorphic as $\Z_{\ell}$-algebra to $\ZZ(\ub_{\ell})_0$.

 Now Theorem \ref{GSP} follows from Lemma \ref{density} combined with (***).

\end{proof}

\section{Proof of main results}
\label{mainProof}

\begin{proof}[Proof of Theorem \ref{main}]

In light of Sect. \ref{remind}, the result follows from Lemma \ref{density} combined with (***).
\end{proof}

\begin{proof}[Proof of Theorem \ref{jacobian}]
Recall  that the  $A$ is a {\sl jacobian} and therefore admits a canonical  principal polarization ${\lambda}$. This implies that the corresponding
alternating $\Z_{\ell}$-bilinear form
$$e_{{\lambda},\ell}: T_{\ell}(A) \times T_{\ell}(A) \to \Z_{\ell}$$
is unimodular.  It is also known \cite{ZarhinMMJ} that  our assuptions on the Galois group of  $f(x)$ imply that
$$\GG_{A,\ell}=\GSp(V_{\ell}(A), e_{\lambda,\ell})$$ for all primes $\ell$. 

For each $\ell \in P$ the abelian variety$B^{(\ell)}$  admits a polarization say, $\mu_{\ell}$ of degree prime to $\ell$.
This implies that the corresponding
alternating $\Z_{\ell}$-bilinear form
$$e_{\mu_{\ell},\ell}: T_{\ell}(B^{(\ell)}) \times T_{\ell}(B^{(\ell)}) \to \Z_{\ell}$$
is unimodular.  Let us put
$$V_{\ell} =  V_{\ell}(B^{(\ell)}),\  T_{\ell}=T_{\ell}(B^{(\ell)}),\  e_{\ell}=e_{\mu_{\ell},\ell}.$$

  Since both alternating  forms $e_{{\lambda},\ell}$ and $e_{\mu,\ell}$ are unimodular and the ranks of free $\Z_{\ell}$-modules
$T_{\ell}(A)$ and $T_{\ell}(B^{(\ell)})$ do coincide, there is a symplectic isomorphism of free $\Z_{\ell}$-modules
$$\phi_{\ell}i:T_{\ell}(A) \cong T_{\ell}(B^{(\ell)}),$$
which extends by $\Q_{\ell}$-linearity to the symplectic isomorphism of $\Q_{\ell}$-vector spaces
$$V_{\ell}(A) \cong V_{\ell}(B^{(\ell)}),$$
which we continue to denote $\phi$. Clearly,
$$\Gp(V_{\ell}(A), e_{{\lambda},\ell})= \phi^{-1}\Gp(V_{\ell}(B^{(\ell)}), e_{\mu,\ell})\phi.$$
Using Theorem \ref{CM}, pick
$$\ub_{\ell}\in  \End((B^{(\ell)}))\subset  \End_{\Q_{\ell}}(V_{\ell}(B^{(\ell)})=\End_{\Q_{\ell}}(V_{\ell})$$ 
such that its characteristic polynomial has no multiple roots,
$\ub_{\ell}$ lies in $$\Gp(V_{\ell}(B^{(\ell)}),e_{\mu,\ell})=\Gp(V_{\ell},e_{\ell})$$ and the centralizer
$\ZZ(\ub_{\ell})_0$ of $\ub_{\ell}$ in $\End_{\Z_{\ell}}(T_{\ell}(B^{(\ell)}))=\End_{\Z_{\ell}}(T_{\ell})$
 coincides with $\End((B^{(\ell)}))\otimes \Z_{\ell}$.
Now the result follows from Theorem \ref{GSP}.

\end{proof}

\section{Complements}

\begin{thm}
\label{jacobian2}
Let $g \ge 2$ be an integer, $n=2g+1$ or $2g+2$.  Let $\P$ be a nonempty finite set of primes and suppose
that for each $\ell \in \P$  we are given a $g$-dimensional semisimple commutative $\Q_{\ell}$-algebra $\CC_{\ell}$
and an order $\RR_{\ell}$ in $\CC_{\ell}$.

 Let $K$ be a number field and $f(x) \in K[x]$ be a degree $n$
 irreducible polynomial, whose Galois group over $K$ is either full symmetric group $\ST_n$ or the alternating group $\A_n$.
Let us consider the genus $g$ hyperelliptic curve $C_f:y^2=f(x)$ and its  jacobian $A$, which is a $g$-dimensional abelian variety over $K$.

Let $\Sigma$ be the set of all nonarchimedean places $v$ of $K$ such that $A$ has good reduction at $v$, the residual characteristic $\fchar(k(v))$ does not 
belong to $\P$ and the $\Z_{\ell}$-rings $\End(A)\otimes \Z_{\ell}$ and $\RR_{\ell}\oplus \RR_{\ell}$ are isomorphic for all $\ell\in \P$.  Then $\Sigma$ has density $>0$.
\end{thm}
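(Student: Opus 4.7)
The plan is to reduce Theorem \ref{jacobian2} to Theorem \ref{GSP} by producing, for each $\ell \in \P$, a model quadruple $(V_\ell, T_\ell, e_\ell, \ub_\ell)$ from the purely algebraic data $(\CC_\ell, \RR_\ell)$ whose associated centralizer is isomorphic to $\RR_\ell\oplus\RR_\ell$. The construction we need is exactly the one carried out in Example \ref{symplecticSplit}.

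First I would verify that the hypotheses of Theorem \ref{GSP} hold for $A$. Since $A$ is a jacobian it carries the canonical principal polarization $\lambda$, which has degree $1$ and is therefore automatically prime to every $\ell \in \P$, so each $e_{\lambda,\ell}$ is a unimodular alternating form on $T_\ell(A)$. The hypothesis that $\Gal(f/K)$ is $\ST_n$ or $\A_n$ combined with \cite{ZarhinMMJ} (as invoked in the proof of Theorem \ref{jacobian}) gives $\GG_{\ell,A}=\GSp(V_\ell(A), e_{\lambda,\ell})$ for every prime $\ell$.

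Next, for each $\ell \in \P$ I would apply the construction of Example \ref{symplecticSplit} with $(\CC,\RR)=(\CC_\ell,\RR_\ell)$. Starting from a free $\RR_\ell$-module $\mathcal{T}_\ell$ of rank $1$, this produces a $2g$-dimensional $\Q_\ell$-vector space $\mathbf{V}_\ell=\mathcal{V}_\ell\oplus\Hom_{\Q_\ell}(\mathcal{V}_\ell,\Q_\ell)$ with $\Z_\ell$-lattice $\mathbf{T}_\ell=\mathcal{T}_\ell\oplus\Hom_{\Z_\ell}(\mathcal{T}_\ell,\Z_\ell)$, a perfect alternating form $e_\ell$ on $\mathbf{T}_\ell$, and an element
$$\ub_\ell=(u_1,\, q u_1^{-1})\in (\CC_\ell\oplus\CC_\ell)^{*}\subset \Gp(\mathbf{V}_\ell,e_\ell)$$
whose characteristic polynomial has no multiple roots and whose centralizer in $\End_{\Z_\ell}(\mathbf{T}_\ell)$ coincides with $\RR_\ell\oplus\RR_\ell$. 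These are precisely the objects required by Theorem \ref{GSP}: a $2g$-dimensional symplectic $\Q_\ell$-space with a unimodular $\Z_\ell$-lattice, and a regular semisimple symplectic similitude whose integral centralizer is the $\Z_\ell$-algebra we wish to realize.

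Finally I would feed the quadruples $(\mathbf{V}_\ell,\mathbf{T}_\ell,e_\ell,\ub_\ell)_{\ell\in\P}$ together with $A$ into Theorem \ref{GSP}. Its conclusion is that the set of nonarchimedean places $v$ of $K$ at which $A$ has good reduction, whose residual characteristic lies outside $\P$, and for which
$$\End(A(v))\otimes\Z_\ell\cong \ZZ(\ub_\ell)_0=\RR_\ell\oplus\RR_\ell\quad \forall\ell\in\P,$$
has positive density. This is exactly the set $\Sigma$ of the theorem, so we are done. There is no substantive new obstacle: all the real analytic/group-theoretic work sits in Example \ref{symplecticSplit} (which itself rests on Lemma \ref{generator} and Lemma \ref{generator2}) and in Theorem \ref{GSP}; the only thing to check is the clean matching of the hypotheses of Theorem \ref{GSP} with the output of Example \ref{symplecticSplit}, which is immediate.
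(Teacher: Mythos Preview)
Your proposal is correct and follows essentially the same route as the paper: invoke Example \ref{symplecticSplit} to manufacture, for each $\ell\in\P$, a symplectic $(\mathbf{V}_\ell,\mathbf{T}_\ell,e_\ell)$ and a regular semisimple $\ub_\ell\in\Gp(\mathbf{V}_\ell,e_\ell)$ with $\ZZ(\ub_\ell)_0\cong\RR_\ell\oplus\RR_\ell$, then apply Theorem \ref{GSP} (the paper even omits the explicit GSp check you include). One small aside: Example \ref{symplecticSplit} rests on Example \ref{GL} and Lemma \ref{generator}, not on Lemma \ref{generator2}, which is used only for Theorem \ref{jacobian3}.
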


\begin{proof}
Recall that $A$ admits a principal polarization $\lambda$ and for each prime $\ell$ 
$$e_{\lambda,\ell}: T_{\ell}(A)\times  T_{\ell}(A) \to \Z_{\ell}$$
is the corresponding alternating perfect $\Z_{\ell}$-bilinear pairing.
Let   $\ell$ be a prime that lies in $\P$. Let us put 
$$\RR=\RR_{\ell}, \ \CC=\CC_{\ell}$$ and 
fix a free $\RR=\RR_{\ell}$-module $\mathcal{T}=\mathcal{T}_{\ell}$ of rank 1 (e.g.,  $\mathcal{T}_{\ell}=\RR_{\ell}$).
Let 
$$\mathcal{V}=\mathcal{V}_{\ell}, \mathbf{T}= \mathbf{T}_{\ell}, \mathbf{V}= \mathbf{V}_{\ell}$$
be as in Example \ref{symplecticSplit}.  In particular,  $\mathbf{T}_{\ell}$ is a free $\Z_{\ell}$-module of rank $2g$ that
is a lattice in the $2g$-dimensional $\Q_{\ell}$-vector space $\mathbf{V}$.

In addition, using  Example \ref{symplecticSplit}, we obtain an alternating perfect $\Z_{\ell}$-bilinear form
$$e_{\ell}: \mathbf{T}_{\ell} \times \mathbf{T}_{\ell}  \to \Z_{\ell}$$
and an element
$$\ub_{\ell} \in \Gp(\mathbf{V}_{\ell},e_{\ell})\subset \Aut_{\Q_{\ell}}(\mathbf{V}_{\ell})$$
such that the centralizer $\ZZ(\ub_{\ell})_0$ in $\End_{\Z_{\ell}}(\mathbf{T}_{\ell})\subset \End_{\Q_{\ell}}(\mathbf{V}_{\ell}) $ is isomorphic to $\RR_{\ell}\oplus \RR_{\ell}$.

Now the result follows from Theorem \ref{remind}.




\end{proof}

\begin{thm}
\label{jacobian3}
Let $g \ge 2$ be an integer, $n=2g+1$ or $2g+2$.  Let $\P$ be a nonempty finite set of primes and suppose
that for each $\ell \in \P$  we are given the following data.

\begin{itemize} 
\item
A degree $g$ field extension $F_{0,\ell}/\Q_{\ell}$. We write $\Oc_{0,\ell}$ for the ring of integers in the $\ell$-adic field  $F_{0,\ell}$.
\item
A $2$-dimensional semisimple commutative $\F_{0,\ell}$-algebra $\CC_{\ell}$.
\item
An $\Oc_{0,\ell}$-subalgebra  $R_{\ell}$ of $\CC_{\ell}$ that is a free $\Oc_{0,\ell}$-module of rank $2$.
\end{itemize}

 Let $K$ be a number field and $f(x) \in K[x]$ be a degree $n$
 irreducible polynomial, whose Galois group over $K$ is either full symmetric group $\ST_n$ or the alternating group $\A_n$.
Let us consider the genus $g$ hyperelliptic curve $C_f:y^2=f(x)$ and its  jacobian $A$, which is a $g$-dimensional abelian variety over $K$.

Let $\Sigma$ be the set of all nonarchimedean places $v$ of $K$ such that $A$ has good reduction at $v$, the residual characteristic $\fchar(k(v))$ does not 
belong to $\P$ and the $\Z_{\ell}$-rings $\End(A)\otimes \Z_{\ell}$ and $R_{\ell}$ are isomorphic for all $\ell\in \P$.  Then $\Sigma$ has density $>0$.
\end{thm}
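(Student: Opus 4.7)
The plan is to reduce Theorem \ref{jacobian3} to Theorem \ref{GSP} in exactly the same manner as Theorem \ref{jacobian2}, but replacing the split construction of Example \ref{symplecticSplit} with the trace-form construction of Section \ref{NT} (which is designed precisely for the case when the orders live over a nontrivial extension of $\Z_{\ell}$). The Galois-theoretic input and the polarization input are identical to the proof of Theorem \ref{jacobian2}.

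First I would record two facts about the Jacobian $A$ that are independent of $\ell \in \P$. Since $A$ is the Jacobian of a hyperelliptic curve, it carries a canonical principal polarization $\lambda$, so for every prime $\ell$ the Weil pairing $e_{\lambda,\ell}\colon T_\ell(A)\times T_\ell(A) \to \Z_\ell$ is unimodular. Because $f(x)$ has Galois group $\ST_n$ or $\A_n$, the result of \cite{ZarhinMMJ} already used in the proof of Theorem \ref{jacobian} gives $\GG_{\ell,A}=\GSp(V_\ell(A),e_{\lambda,\ell})$ for all primes $\ell$. These are precisely the global hypotheses required by Theorem \ref{GSP}, with the polarization degree $\deg(\lambda)=1$ trivially prime to every $\ell \in \P$.

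Next, for each $\ell \in \P$ I would assemble the local data $(V_\ell, T_\ell, e_\ell, \ub_\ell)$ demanded by Theorem \ref{GSP}. Apply the machinery of Section \ref{NTgeneral} to $F_{0,\ell}/\Q_\ell$: pick a uniformizer $\pi_\ell$ of $\Oc_{0,\ell}$ and the integer $d=d_\ell$ with inverse different $\mathcal{D}^{-1}=\pi_\ell^{-d}\Oc_{0,\ell}$, so that the symmetric $\Z_\ell$-bilinear form $\tilde{B}_{\Tr}(x,y)=\Tr_{F_{0,\ell}/\Q_\ell}(\pi_\ell^{-d}xy)$ on $\Oc_{0,\ell}$ is unimodular. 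Take $T_\ell$ to be a free $\Oc_{0,\ell}$-module of rank $2$ (say $T_\ell \cong R_\ell$ as $\Oc_{0,\ell}$-modules) equipped with the essentially unique alternating unimodular $\Oc_{0,\ell}$-bilinear form $e_0$; by Lemma \ref{unimod} the form $e_\ell(x,y):=\Tr_{F_{0,\ell}/\Q_\ell}(\pi_\ell^{-d}e_0(x,y))$ is an alternating unimodular $\Z_\ell$-bilinear form on $T_\ell$, so in particular perfect on $T_\ell\times T_\ell$. Via the fixed identification $T_\ell \cong R_\ell$, the space $V_\ell=T_\ell\otimes_{\Z_\ell}\Q_\ell$ acquires the structure of a free $\CC_\ell$-module of rank $1$, and the discussion in Section \ref{NTalgebra} gives an embedding $\CC_\ell \hookrightarrow \End_{\Q_\ell}(V_\ell)$ such that $R_\ell$ is precisely the centralizer of $\CC_\ell$ inside $\End_{\Z_\ell}(T_\ell)$.

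Then I would invoke Lemma \ref{generator2} to obtain $\ub_\ell \in \CC_\ell^{*}$ lying in $\Gp(V_\ell,e_\ell)$ and satisfying $\Q_\ell[\ub_\ell]=\CC_\ell$, so that the centralizer $\ZZ(\ub_\ell)_0$ in $\End_{\Z_\ell}(T_\ell)$ equals $R_\ell$. Since $\dim_{\Q_\ell}\Q_\ell[\ub_\ell]=\dim_{\Q_\ell}\CC_\ell=2g=\dim_{\Q_\ell}V_\ell$, the characteristic polynomial $P_{\ub_\ell}(t)$ has no multiple roots. All hypotheses of Theorem \ref{GSP} are now in place with this choice of $(V_\ell,e_\ell,T_\ell,\ub_\ell)_{\ell\in\P}$, and Theorem \ref{GSP} delivers that the set of $v$ with good reduction, residual characteristic outside $\P$, and $\End(A(v))\otimes\Z_\ell \cong \ZZ(\ub_\ell)_0 = R_\ell$ for every $\ell\in\P$ has positive density, which is the desired conclusion.

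There is no real obstacle: the content of the argument is that Section \ref{NT} has been written precisely to produce, given an $\Oc_{0,\ell}$-order $R_\ell$ of rank $2$ in a two-dimensional semisimple $F_{0,\ell}$-algebra, a symplectic lattice $(T_\ell,e_\ell)$ together with a symplectic similitude $\ub_\ell$ whose centralizer is $R_\ell$, which is the only missing ingredient for Theorem \ref{GSP}. If I were to worry about anything, it would be keeping straight that the hypothesis ``$\deg(\lambda)$ not divisible by $\ell$ for all $\ell\in\P$'' in Theorem \ref{GSP} is automatically satisfied here because $\lambda$ is principal, and that the application of Lemma \ref{generator2} covers both the case $\CC_\ell$ is a field and the case $\CC_\ell=F_{0,\ell}\oplus F_{0,\ell}$ uniformly.
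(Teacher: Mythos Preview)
Your proposal is correct and follows exactly the approach the paper takes: the paper's own proof is the single sentence ``The proof is literally the same as the proof of Theorem \ref{jacobian2} with the only modification: we need to use Lemma \ref{generator2} instead of Example \ref{symplecticSplit},'' and you have faithfully unpacked precisely that substitution, including the use of Lemma \ref{unimod} to get the unimodular $\Z_\ell$-form and the observation that $\dim_{\Q_\ell}\CC_\ell=2g$ forces $P_{\ub_\ell}$ to have distinct roots.
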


\begin{proof}
The proof is literally the same as the proof of Theorem \ref{jacobian2} with the only modification: we need to use Lemma \ref{generator2} instead of Example
\ref{symplecticSplit}.
\end{proof}

\begin{rem}
 Let ${\bf N}$ be a positive integer. The assertions of Theorems \ref{main},\ref{jacobian}, \ref{GSP}, \ref{jacobian2}, \ref{jacobian3}
(resp. of Example \ref{elliptic} and Corollary \ref{discrE})
remain true if
we impose an additional condition on the places $v$ that the residual characteristic of $v$ does not divide 
${\bf N}$ and $A(v)_{{\bf N}}$ lies in $A(k(v))$ (resp. $E(v)_{{\bf N}}$ lies in $E(k(v))$). Indeed, let $\P^{\prime}$ be the set of prime
divisors of ${{\bf N}}$. Then the proofs remain the same with the only modification: we should deal with the finite set of primes
$\tilde{\P}=\P \bigcup \P^{\prime}$ (instead of $\P$) and apply Remark \ref{indexD} (instead of Lemma \ref{density})
to $G_{\ell}=G_{\ell,A}$ for all $\ell \in \tilde{\P}$,
$$G_{\ell}^{\prime}=G_{\ell,A} \bigcap [\II+ {\bf N}\cdot \End_{\Z_{\ell}}(T_{\ell}(A))]\subset G_{\ell,A}=G_{\ell}$$
if $\ell\mid {\bf N}$ and 
$$G_{\ell,A}^{\prime}=G_{\ell,A}=G_{\ell}$$ if  $\ell$ does {\sl not} divide ${\bf N}$.
\end{rem}

\begin{rem}
In  Theorems \ref{main},\ref{jacobian}, \ref{GSP}, \ref{jacobian2}, \ref{jacobian3} we assume that $\Gal(f)=\ST_n$ or $\A_n$  only in order
to make sure that the jacobian is of GSp type. See \cite{ZarhinMMJ,ZarhinPLMS,ZarhinTAMS} where we discuss the cases of smaller $\Gal(f)$'s when 
 the jacobian is still of GSp type and therefore Theorems \ref{main},\ref{jacobian}, \ref{GSP}, \ref{jacobian2}, \ref{jacobian3} remain true.
\end{rem}

\end{document}